\newtheorem{theorem}{Theorem}[section]
\newtheorem{lemma}[theorem]{Lemma}
\newtheorem{remark}[theorem]{Remark}
\newtheorem{assumption}[theorem]{Assumption}
\newcommand{\KK}{{K_h}} %{{\mathsf{K_h}}}
\newcommand{\AAA}{{\mathsf{A}}}
\newcommand{\AAK}{\mathsf{A}_k}
\newcommand{\WW}{{\mathsf{W}}}
\newcommand{\WWK}{\mathsf{W}_k}
\newcommand{\PP}{\mathsf{P}}
\newcommand{\PPK}{\mathsf{P}_k}
\newcommand{\VV}{{\mathsf{V}}}
\newcommand{\SSa}{{\mathsf{S}_{\beta}}}
\newcommand{\UU}{{\mathsf{U}}}
\newcommand{\SI}{{\mathsf{\Sigma}}}
\newcommand{\SIK}{\mathsf{\Sigma}_k}
\newcommand{\xx}{{\mathbf{x}}}
\newcommand{\yy}{{\mathbf{y}}}
\newcommand{\bb}{{\mathbf{b}}}
\newcommand{\ee}{{\mathbf{e}}}
\newcommand{\nn}{{\mathbf{n}}}
\newcommand{\halpha}{{\beta}}
\newcommand{\hxx}{{\hat{\mathbf{x}}}}
\newcommand{\Rn}{{\mathbb{R}^n}}
\newcommand{\Rnn}{{\mathbb{R}^{n\times n}}}
\newcommand{\Rmn}{{\mathbb{R}^{m\times n}}}
\newcommand{\gam}{{\gamma_{j,\alpha}}}
\newcommand{\gamn}{{\gamma_{j,\alpha,\eta}}}
\newcommand{\nullspace}[1]{{\mathcal{N}(#1)}}
\newcommand{\pphi}{{\mathbf{\phi}}}
\DeclareMathOperator*{\argmin}{arg\,min}
\DeclareMathOperator*{\argmax}{arg\,max}
\title{Weighted sparsity regularization for source identification for elliptic PDEs}
\author{Ole L{\o}seth Elvetun\thanks{Faculty of Science and Technology, Norwegian University of Life Sciences, P.O. Box 5003, NO-1432 {\AA}s, Norway. Email: ole.elvetun@nmbu.no.} and Bj{\o}rn Fredrik Nielsen\thanks{Faculty of Science and Technology, Norwegian University of Life Sciences, P.O. Box 5003, NO-1432 {\AA}s, Norway. Email: bjorn.f.nielsen@nmbu.no. Nielsen's work was supported by The Research Council of Norway, project number 239070.}}
\begin{document}

\maketitle

\begin{abstract}
    This investigation is motivated by PDE-constrained optimization problems arising in connection with \textcolor{black}{electrocardiograms (ECGs) and electroencephalography (EEG)}.  Standard sparsity regularization does not necessarily produce adequate results for these applications because only boundary data/observations are available for the identification of the unknown source, which may be interior. We therefore study a weighted $\ell^1$-regularization technique for solving inverse problems when the forward operator has a significant null space. In particular, we prove that a sparse source, regardless of whether it is interior or located at the boundary, can be exactly recovered with this weighting procedure as the regularization parameter $\alpha$ tends to zero. 
    %Furthermore, for positive values of $\alpha$, we show that the regularized inverse solution equals the true source multiplied by a scalar $\gamma$, where $\gamma = 1 - c\alpha$. 
    Our analysis is supported by numerical experiments for cases with one and several local sources. The theory is developed in terms of Euclidean spaces, and our results can therefore be applied to many problems.  
\end{abstract}

\noindent {\bf 2010 Mathematics Subject Classification:} 35R30, 47A52, 65F22. \\

\noindent {\bf Keywords:}
Sparsity regularization, inverse source problems, PDE-constrained optimization, null space.

\section{Introduction}
Consider the challenge of identifying a sparse source in an elliptic PDE from Dirichlet boundary data: 
    \begin{equation}\label{eq1}
        \min_{(f,u) \in F_h \times H^1(\Omega)} \left\{ \frac{1}{2}\|u - d\|_{L^2(\partial\Omega)}^2 + \alpha \sum_i w_i |(f, \phi_i)_{L^2(\Omega)}| \right\}
    \end{equation}
    subject to 
    \begin{equation}\label{eq2}
    \begin{split}
        -\Delta u + \epsilon u &= f \quad \mbox{in } \Omega, \\
        \frac{\partial u}{\partial \nn} &= 0  \quad \mbox{on } \partial \Omega, 
    \end{split}
    \end{equation} 
where we employ weighted $\ell^1$-regularization in \eqref{eq1}. Here, $\{ \phi_1, \phi_2, \ldots, \phi_n \}$ is an $L^2$-orthonormal basis for $F_h$, $\{ w_i \}$ are positive weights, $\alpha > 0$ is a regularization parameter, \textcolor{black}{$d$ represents the Dirichlet boundary data}, $\epsilon$ is a parameter, $\nn$ denotes the outwards pointing unit normal vector of the boundary $\partial \Omega$ of the bounded domain $\Omega$, and $f$ is the unknown source. %\textcolor{black}{Also note that we will discretize the source $f$ in terms of basis functions which yield a simple link between the $L^1$ and $\ell^1$ norms.}

\textcolor{black}{Note that we, for the sake of simplicity, consider a finite dimensional control/source space $F_h$. Since the goal is to recover spatially sparse solutions, the basis functions $\{ \phi_1, \phi_2, \ldots, \phi_n \}$ should have small and local support. In the infinite dimensional setting one would thus typically search for point-sources or Dirac measures. As shown in \cite{casas2012approximation}, this leads to a number of subtle mathematical issues, even in the unweighted case.}

Variants of \textcolor{black}{\eqref{eq1}-\eqref{eq2}} appear in many applications, such as in crack determination, in the inverse ECG problem and in \textcolor{black}{EEG investigations}. Consequently, it has received much attention from researchers, see, e.g.,  \cite{babda09,cheng15,Han11,Het96,hinze19,BIsa05,kun94,ring95,song12}. However, in these studies the authors did not use $\ell^1$-regularization. A more detailed description of previous investigations is presented in \cite{Elv20}. 

Regularization of inverse problems with sparsity promoting methods has increased in popularity in 
recent years \cite{daubechies04, grasmair08, Jin_2012, Jin_2017,lorenz08, Lu_2019, Flemming16}. In this context, the notion of sparsity, with respect to a given basis $\{\phi_i\}$ for the control space, means that the inverse solution $f^*$ can be represented using very few of the basis functions. That is, if we expand the inverse solution $f^*$ as $f^*(x) = \sum_i f_i^*\phi_i(x)$, then $f_i^* = (f^*,\phi_i)_{L^2(\Omega)} \neq 0$ only for very few of the basis functions. If we have $s$ such non-zero components, we say that the solution $f^*$ is $s$-sparse.

In \textit{compressed sensing} the study of $\ell^1$-regularization for the exact recovery of a sparse source from noise free data has been studied in detail \cite{candes06,candes05,donoho03}. To elaborate the main findings in \cite{candes05}, let us assume that $\AAA$ is a matrix with a significant null space. If $\bb^\dagger$ is generated from a sparse source $\xx^\dagger$, i.e., $\bb^\dagger = \AAA\xx^\dagger$, then the minimizer of
\begin{equation*}
    \min_{\xx\in\Rn} \|\xx\|_1 \quad \textnormal{subject to} \quad \AAA\xx = \bb^\dagger,
\end{equation*}
is the true sparse solution $\xx^\dagger$ when a certain assumption \textcolor{black}{on $\AAA$}, known as the \textit{restricted isometry property}, is fulfilled.

In \cite{grasmair10} the authors unified this result with the theory developed by the inverse problem community. In particular, they showed that the commonly used range condition, combined with an additional restricted injectivity condition on the forward operator, are weaker assumptions than the previously mentioned \textit{restriced isometry property}, and they proved that the former conditions are the weakest which admit linear convergence rates for the regularized problem
\begin{equation*}
    \min_{\xx\in\Rn} \left\{ \frac{1}{2} \|\AAA\xx - \bb\|_2^2 + \alpha \|\xx\|_1 \right\}.
\end{equation*}
\begin{comment}
That is, for every $C > 0$, there exists $c > 0$ such that one of the solutions $\xx_\alpha$ of the problem above satisfies  
\begin{equation*}
    \|\xx_\alpha - \xx^\dagger\| \leq c\zeta 
\end{equation*}
when the noise level is such that $\|\bb - \bb^\dagger\| = \|\bb - \AAA\xx^\dagger \| \leq \zeta$ and $\alpha = C\zeta$.
\end{comment}

\textcolor{black}{Numerical experiments indicate that the identification criteria, mentioned in the previous paragraph, are not fulfilled by the forward/transfer matrix $\AAA$ associated with \eqref{eq1}-\eqref{eq2}, 
%when $\WW=\II$ (the identity map), 
see Figure \ref{fig:ex0}. \textcolor{black}{(Further details about the matrix $\AAA$ are presented in the next section and in Appendix \ref{app:discretization}.)} This is the main motive for the present study. Observe also that the inverse solution in panel (b) in Figure \ref{fig:ex0} only is nonzero close to where observations are made, i.e., close to the boundary $\partial \Omega$ of $\Omega$. \textcolor{black}{This type of "behaviour" was not only observed with $\alpha=10^{-4}$, but in every experiment we performed with standard sparsity regularization. In view of the analysis presented in Appendix \ref{app:standard}, see also Proposition 2.3 in \cite{casas2012approximation}, this is not surprising.}}

\begin{figure}[h]
    \centering
    \begin{subfigure}[b]{0.45\linewidth}        %% or \columnwidth
        \centering
        \includegraphics[width=\linewidth]{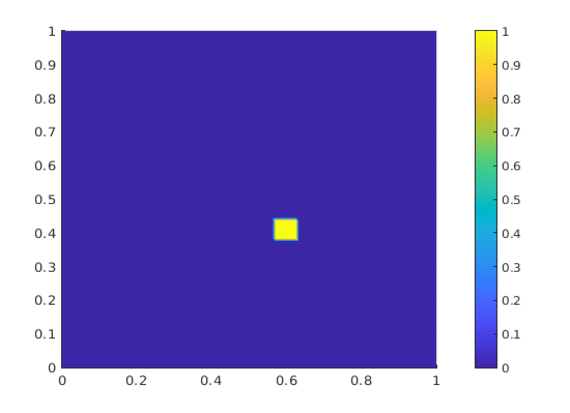}
        \caption{True source.}
        %\label{fig:1A}
    \end{subfigure}
    \begin{subfigure}[b]{0.45\linewidth}        %% or \columnwidth
        \centering
        \includegraphics[width=\linewidth]{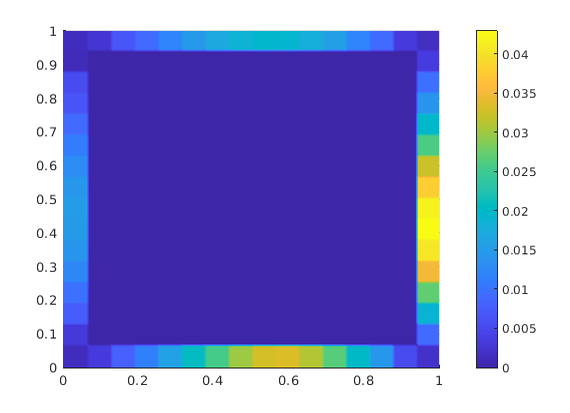}
        \caption{Inverse solution.}
        %\label{fig:1B}
    \end{subfigure}\par
    \caption{Panel (b) shows the outcome of attempting to use \eqref{eq1}-\eqref{eq2}, with $w_i=1$ for $i=1,2,\ldots,n$ and $\alpha = 10^{-4}$, to recover the true interior source depicted in panel (a). That is, the Dirichlet boundary data $d$ in \eqref{eq1} was generated by the true source (a).} % displayed in the left panel.}
    %\caption{Comparison of a true source and an inverse solution computed by numerically solving \eqref{eq1}-\eqref{eq2} with $\WW = \II$ and $\alpha = 10^{-4}$. (The Dirichlet boundary data $d$ in \eqref{eq1} was generated by the true source depicted in the left panel.)}
    \label{fig:ex0}
\end{figure}

%XX Epicardial-potential \cite{Rudy2009,Wang2012}, geophysical applications \cite{Jeong2013,loris2007tomographic}, PDE-constrained optimization \cite{Golmohammadi2018,casas2012approximation,herman2020randomization,Xiang_2020,li2019sparse}. 

Many researchers have explored the use of sparsity regularization in connection with PDEs, see, e.g., \cite{casas2012approximation,casas2013parabolic,Golmohammadi2018,herman2020randomization,Xiang_2020,li2019sparse,Rudy2009,Wang2012}. However, as far as the authors know, the use of weighted $\ell^1$-regularization to solve inverse source problems for PDEs, employing only boundary measurements, has not previously been attempted. In \cite{mehr_2015} the authors apply an iterative reweighted $\ell^1$-regularization technique \cite{candes2008enhancing} to recover sources, but their approach differs significantly from ours and involve data recorded at equidistant locations inside the domain. 
%PDE-constrained optimization problems with $L^2$-observation on the \textit{entire} domain have been studied, e.g. in \cite{Herzog2015, Stadler07, Porcelli17}. This formulation results in forward operators with trivial null spaces, and the main interest is preconditioning of the associated optimality system.}

We will start with a brief motivation for developing weighted sparsity promoting regularization for ECG and EEG applications in Section \ref{sec:motivation}. In Section \ref{sec:analysis} we prove that our methodology can recover, without any errors or blurring, a single local source, regardless of whether it is interior or at the boundary. We also analyze a problem for which it is impossible to recover two separate sources. This implies that we can not guarantee, in general, that our approach can recover multiple local sources. However, in Section \ref{sec:numerical_experiments} many numerical experiments are presented, for both single and multiple sources, and we observe that several sources often can be successfully recovered.

We would like to emphasize that, even though our motivation originates from inverse source problems for elliptic PDEs, the general theory presented in this paper is applicable to any linear finite-dimensional inverse problem where the forward operator has a non-trivial null space.

\textcolor{black}{One may regard this paper to be follow-up work to \cite{Elv20}: In \cite{Elv20} weighted Tikhonov regularization is proposed and analyzed, and in this text we employ the same weight-matrix in connection with sparsity regularization. However, since both the results and the analysis of the sparsity approach differs significantly from the investigation of the quadratic regularization, a separate study is needed. More precisely, weighted Tikhonov regularization yields a "blurred/smooth" reconstruction of internal sources and the cost-functional is differentiable, whereas weighted sparsity regularization enables perfect recovery of a single source but the objective function is not differentiable.} 

\section{\textcolor{black}{Preliminaries}}\label{sec:prelim}
\textcolor{black}{
To make the forthcoming results applicable to more general finite-dimensional problems, we will present our analysis in terms of Euclidean spaces. First, however, we will study \eqref{eq1}-\eqref{eq2} in greater detail and derive the associated fully discretized problem.}

\textcolor{black}{
We interpret \eqref{eq2} in the following weak sense: Let $\Omega \subset \mathbb{R}^\nu$, $\nu=1,2,3$, be a Lipschitz domain and assume that $f \in F_h \subset L^2(\Omega)$ is given. A function $u \in H^1(\Omega)$ is a solution of \eqref{eq2} if
\begin{equation}
    \int_\Omega \nabla u \cdot \nabla v \ dx +  \epsilon\int_\Omega uv \ dx = \int_\Omega fv \ dx, \quad \forall v \in H^1(\Omega). \label{eq:pre1}
\end{equation}
From standard elliptic PDE-theory \cite{evans98} it follows that there exists a unique solution to \eqref{eq:pre1} which depends continuously on $f \in F_h \subset L^2(\Omega)$. Since the trace operator $T: H^1(\Omega) \rightarrow L^2(\partial\Omega), \, u \mapsto u|_{\partial\Omega}$, is continuous, we can conclude that the forward operator
\begin{equation}
 \KK: F_h \rightarrow L^2(\partial\Omega), \quad f \mapsto u(f)|_{\partial\Omega}, \label{eq:pre2}
\end{equation}
associated with \eqref{eq1}-\eqref{eq2}, is well-defined and continuous.}

\textcolor{black}{
We can now formulate the problem
\begin{equation} \label{eq:pre3}
    \min_{f \in F_h} \underbrace{\left\{ \frac{1}{2}\|\KK f - d\|^2_{L^2(\partial\Omega)} + \alpha \sum_i w_i |(f, \phi_i)_{L^2(\Omega)}| \right\}}_{= \mathcal{G}(f)},
\end{equation}
which is equivalent to \eqref{eq1}-\eqref{eq2}. Continuity of the cost functional $\mathcal{G}$ follows immediately from the continuity of the norm and, since we assume that $w_i > 0$, $i=1, 2,\ldots, n$, $\mathcal{G}$ is also coercive, provided that $\alpha > 0$. Standard optimization theory thus yields  that \eqref{eq:pre3} has a global minimizer, see, e.g., \cite{Peressini88}.}

\textcolor{black}{
Since $\KK$ is a linear mapping from a finite-dimensional space onto a finite dimensional subspace of $L^2(\partial\Omega)$, it can be represented by its standard matrix $\mathsf{K}$. By expanding $f$ in the orthonormal basis $\{ \phi_1, \phi_2, \ldots, \phi_n \}$ for $F_h$, $f(x)=\sum_i f_i \, \phi_i(x)$, we obtain the Euclidean approximation of \eqref{eq:pre3}: 
\begin{equation} \label{eq:pre4}
    \min_{\mathbf{f}\in\mathbb{R}^n}\left\{\frac{1}{2}\|\mathsf{M}^\frac{1}{2}_\partial \tilde{\mathsf{K}}\mathbf{f}-\mathsf{M}^\frac{1}{2}_\partial \mathbf{d}\|_2^2 + \alpha\sum_i w_i|f_i|\right\},
\end{equation}
where $\mathbf{f}$ and $\mathbf{d}$ are the Euclidean vectors associated with $f$ and $d$, respectively, $\mathsf{M}_\partial$ represents the (so-called) boundary mass matrix, and $\tilde{\mathsf{K}}$ is a numerical approximation of $\mathsf{K}$ obtained from a discretization of \eqref{eq:pre1}, see Appendix \ref{app:discretization}. 
%the square-root of the boundary mass matrix, $\mathsf{M}_\partial^\frac{1}{2}$, is a representation of the $L^2(\partial\Omega)$-inner product.
}

\section{Motivation}
\label{sec:motivation}
The purpose of EEG is to recover electrical activity in the brain from voltage recordings on the scalp. If one suspects that the true signal is spatially local, e.g., for focal epileptic seizures, it is natural to search for sparse solutions.

Similarly, in the inverse problem of ECG, the aim can be to locate an ischemic\footnote{Ischemia is a precursor of a heart infarct.} region of the heart. This area will have an electrical potential which is different from the voltage in healthy tissue. The difference in the potential can be interpreted as the source in an elliptic PDE. If we assume that the ischemic region is small, with a sharp transition between ischemic and healthy tissue, it is reasonable to search for a sparse inverse solution.

Solving inverse source problems using optimization procedures is challenging. For example, \textcolor{black}{now employing {\em standard Tikhonov regularization} instead of the weighted sparsity approach}, the minimizer $f_\alpha \in F_h \subset L^2(\Omega)$ of 
    \begin{equation*}
        \min_{(f,u) \in F_h \times H^1(\Omega)} \left\{ \frac{1}{2}\|u - d\|_{L^2(\partial\Omega)}^2 + \frac{1}{2}\alpha\|f\|_{L^2(\Omega)}^2 \right\}
    \end{equation*}
    subject to 
    \begin{equation*}
    \begin{split}
        -\Delta u + \epsilon u &= f \quad \mbox{in } \Omega, \\
        \frac{\partial u}{\partial \nn} &= 0  \quad \mbox{on } \partial \Omega,
    \end{split}
    \end{equation*}
is not, in general, a good approximation of a true interior source: Even if the data $d$ is generated from a single basis function $\phi_j \in F_h$, representing an interior local source, the minimum $L^2$-norm least-squares solution $f^* = \lim_{\alpha\rightarrow0} f_{\alpha}$ will attain its maximum at the boundary $\partial \Omega$ of the domain $\Omega$, see \cite{Elv20}. However, by employing a carefully chosen "diagonal" regularization operator $W$, i.e., $$W \phi_i = w_i \phi_i, \quad i=1,2,\ldots,n,$$ with suitable weights $\{ w_i \}$, we proved in \cite{Elv20} that $W^{-1} f^*$ attains its maximum at the position of the true local source $\phi_j$. On the other hand, $W^{-1} f^*$ is very smooth and its magnitude will typically not be close to the magnitude of the true source.

The need for sparse solutions in many applications, as exemplified above, thus motivates the study of $\ell^1$-regularization for inverse source problems. We now present a brief overview of the main results of our forthcoming analysis.

Assume that the Dirichlet boundary data $d \in L^2(\partial\Omega)$ in \eqref{eq1} is generated from a basis function $\phi_j \in F_h$, i.e., $$d = \KK\phi_j,$$ where $\KK$
% \begin{equation*}
%  \KK: F_h \rightarrow L^2(\partial\Omega), \quad f \mapsto u|_{\partial\Omega},
% \end{equation*}
is the forward operator \eqref{eq:pre2} associated with \eqref{eq1}-\eqref{eq2}. Provided that $W$ denotes the above-mentioned regularization operator introduced in \cite{Elv20}, we will prove that $\phi_j$ is the unique solution of
\begin{equation*}
    \min_{f \in F_h} \sum_i w_i |(f, \phi_i)_{L^2(\Omega)}| \quad \textnormal{subject to} \quad \KK f = \KK \phi_j.
\end{equation*}
We will further show, using a slightly modified fidelity term in \eqref{eq1}-\eqref{eq2}, that the associated minimizer of the weighted $\ell^1$-regularized problem is 
\begin{equation*}
 f_\alpha = \gamma_\alpha \phi_j,
\end{equation*}
where $\gamma_\alpha = 1 - c\alpha$, and $c$ is a positive constant. That is, the correct basis function is recovered without any blurring for all values of $\alpha < \bar{\alpha}$, albeit with an error in magnitude equal to $c\alpha$. The constants $\bar{\alpha}$ and $c$ can be computed from $W$ and a projection operator. 

Ideally, we would like to analyze the recovery of more general composite sources. We have not been able to do so, but we address this issue numerically in Section \ref{sec:numerical_experiments}. 

\section{Analysis}\label{sec:analysis}
\textcolor{black}{As mentioned earlier}, we will present our theoretical results in terms of Euclidean spaces, employing the standard inner product and the standard basis vectors. It should be noted that analogous results can be established for linear operators acting on finite dimensional vector spaces, provided that an orthonormal basis is employed for the domain of the operators. 

\begin{comment}
\textcolor{black}{Below we will discretize the source $f \in F_h$ in \eqref{eq1}-\eqref{eq2} in terms of scaled characteristic functions $\{ \phi_i = c_i \mathcal{X}_{\Omega_i} \}$ with disjoint supports $\{ \Omega_i \}$, $f= \sum_i x_i \phi_i$. Since also our regularization operator $\WW$ is "diagonal", i.e. $\WW \phi_i=w_i \phi_i$ for $i=1,2,\ldots,n$, where $\{w_i \}$ are positive scalars, there is a simple connection between the weighted $L^1$- and $\ell^1$-norms:
\begin{align*}
    \|\WW f\|_1 &= \int_{\Omega} |\WW \sum_i x_i \phi_i| \, dx \\
    &= \sum_i \int_{\Omega_i} |w_i x_i \phi_i| \, dx \\
    &= C \sum_i |w_i x_i| \\
\end{align*}
provided that the constants $\{ c_i\}$ are chosen such that $C=\| \phi_1 \|_1 = \| \phi_2 \|_1 = \ldots \| \phi_n \|_1$. In this context, $n$ represents the number of basis functions. 
}
\end{comment}

Consider the problem 
\begin{equation} \label{eq:BF1}
      \min_{\xx\in\Rn} \left\{ \frac{1}{2}\|\AAA\xx - \bb\|_2^2 + \alpha\|\WW\xx\|_1 \right\},
   \end{equation}
where $\AAA \in \Rmn$ has a non-trivial null space\footnote{Our analysis also holds for matrices with linearly independent columns, but this is the trivial case.} $\nullspace{\AAA}$. Associated with $\AAA$ is the orthogonal projection matrix \begin{equation}\label{eq:Pmatrix}
\PP: \Rn \rightarrow \mathcal{N}(\AAA)^\perp.
\end{equation}
That is, $\PP = \AAA^\dagger\AAA$, where $\AAA^\dagger$ denotes the Moore-Penrose inverse of $\AAA$. 
\textcolor{black}{Note that we can write \eqref{eq:pre4} in the form \eqref{eq:BF1} by putting $\AAA = \mathsf{M}^\frac{1}{2}_\partial \tilde{\mathsf{K}}$, $\bb = \mathsf{M}^\frac{1}{2}_\partial \mathbf{d}$, $\WW = \textnormal{diag}(w_1, w_2, \ldots, w_n)$ and $\xx = \mathbf{f}$.}
%\textcolor{black}{Appendix \ref{app:discretization} contains information about the forward matrix $\AAA$ associated with our model problem \eqref{eq1}-\eqref{eq2}.}
%For the inverse problems of EEG and ECG, $\AAA$ would be the aforementioned transfer matrix. 

Throughout this paper, the diagonal regularization matrix $\WW \in \Rnn$ is defined as
\begin{equation}\label{eq:Wmatrix}
    \WW\ee_i = \|\PP\ee_i\|_2 \ee_i, \, i=1,2,\ldots,n,
\end{equation}
where we assume that $\PP\ee_i \neq 0, \, i=1,2,\ldots,n.$ The definition of this matrix can be motivated by the classical theory for the minimum norm least squares solution of linear systems and Tikhonov regularization, see \cite{Elv20} for further details.  
Furthermore, the beneficial mathematical properties of this operator in connection with quadratic regularization are studied in \cite{Elv20,Elv20a2}. Below it will become clear that \eqref{eq:Wmatrix} also plays an important role for developing sparsity promoting regularization techniques. 

It can be CPU demanding to compute $\PP\ee_i$ for large systems because $\PP$ involves the Moore-Penrose inverse $\AAA^\dagger$ of $\AAA$. On the other hand, if the underlying problem is ill posed, such as \eqref{eq1}-\eqref{eq2}, then one would typically not use a very fine mesh for the discretization of the source, and the regularization matrix $\WW$ is applicable. Below we will approximate $\AAA^\dagger$ using either truncated SVD (Subsection \ref{subsec:altOpt}) or standard Tikhonov regularization (Subsection \ref{subsec:largeCircularSource}). 

The main purpose of this section is to analyze whether \textcolor{black}{the use of our weighted regularization technique enables the recovery of a standard basis vector $\ee_j \in \Rn$ from the exact data $\bb^\dagger = \AAA\ee_j$.}
%we can recover a standard basis vector $\ee_j \in \Rn$ from the exact data $\bb^\dagger = \AAA\ee_j$.
Our starting point is thus the following optimization problem.
\begin{itemize}
    \item \textbf{Problem 0:} 
    \begin{equation}\label{eq:genmin}
      \min_{\xx\in\Rn} \left\{ \frac{1}{2}\|\AAA\xx - \bb^\dagger\|_2^2 + \alpha\|\WW\xx\|_1 \right\}.
   \end{equation}
\end{itemize}

\subsection{Weighted basis pursuit}

Our first result concerns the solution of Problem 0 in the limit $\alpha \rightarrow 0$. 
Recalling that $\bb^\dagger = \AAA\ee_j$, it is well-known that this limit problem can be formulated as
\begin{itemize}
    \item \textbf{Problem I:} 
    \begin{equation*}%\label{eq:case1}
      \min_{\xx\in\Rn} \|\WW\xx\|_1 \quad \textnormal{subject to} \quad \AAA\xx = \AAA\ee_j.
   \end{equation*}
\end{itemize}
We also introduce an equivalent formulation of Problem I, which reads 
 \begin{itemize}
  \item \textbf{Problem II:}
    \begin{equation*}%\label{eq:case2}
      \min_{\xx\in\Rn} \|\WW\xx\|_1 \quad \textnormal{subject to} \quad \PP\xx = \PP\ee_j.
  \end{equation*}
  %where $\PP$ is the orthogonal projection \eqref{eq:Pmatrix}.
  \end{itemize}
Problems I and II are equivalent because the null spaces of $\AAA$ and $\PP$ coincide, i.e., $\mathcal{N}(\AAA) = \mathcal{N}(\PP)$. Hence, either problem can be reformulated as 
  \begin{equation*}
    \min_{\mathbf{q} \in \mathcal{N}(\AAA)} \|\WW(\ee_j + \mathbf{q})\|_1.
  \end{equation*}
  
 We will assume that no {\em two} columns of $\AAA$ are parallel in order to ensure that the solutions of our minimization problems are {\em unique}.  
\begin{assumption}\label{nonpar}
 We assume that $\AAA \in \Rmn$ is such that $\AAA\ee_j \neq c\AAA\ee_i$ for all $i, j \in \{1,2,...,n\}$, $i \neq j$, and all $c \in \mathbb{R}$.
\end{assumption}
\noindent Note that this assumption implies that $\ee_i \notin \mathcal{N}(A)$, \, $i \in \{1,2,...,n\}$. Furthermore, invoking the orthogonal decomposition $\ee_i = \PP \ee_i + (\ee_i - \PP \ee_i)$, where $(\ee_i - \PP \ee_i) \in \nullspace{A}$, it follows that $\AAA \ee_i = \AAA \PP \ee_i$. Consequently, if $\AAA$ obeys Assumption \ref{nonpar}, 
then $\PP$ must also satisfy 
\begin{equation}
    \label{Pnonpar}
    \PP \ee_j \neq c\PP \ee_i \mbox{ for all } i, j \in \{1,2,...,n\}, \, i \neq j, \mbox{ and all } c \in \mathbb{R}. 
\end{equation}

We can now formulate our first result. Let us remark that the study of sparsity promoting regularization techniques usually leads to very involved mathematical analysis. However, with the very specific choice of the weight matrix $\WW$ in \eqref{eq:Wmatrix} and the data $\bb^\dagger = \AAA\ee_j$, our analysis becomes rather "transparent" \textcolor{black}{because we only consider the recovery of a $1$-sparse solution}. 
\begin{theorem}[Exact recovery of a basis vector]\label{thm:exactRecov}
    Assume that $\AAA \in \Rmn$ satisfies Assumption \ref{nonpar}, and let $\PP$ and $\WW$ be the matrices defined in \eqref{eq:Pmatrix} and \eqref{eq:Wmatrix}, respectively. Then $\ee_j$ is the unique solution of problems I and II.
\end{theorem}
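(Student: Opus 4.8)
The plan is to treat Problem II as a weighted basis‑pursuit problem and to exhibit an explicit dual certificate. Writing $w_i := \|\PP\ee_i\|_2$, the objective is $\|\WW\xx\|_1 = \sum_{i=1}^n w_i|x_i|$, and, using the reformulation already recorded in the excerpt, feasibility means $\xx = \ee_j + \mathbf{q}$ with $\mathbf{q} \in \nullspace{\AAA}$. Since $\xx\mapsto\|\WW\xx\|_1$ is convex and we minimize it over the affine set $\ee_j + \nullspace{\AAA}$, the standard first‑order optimality condition is that $\ee_j$ is a minimizer as soon as the subdifferential of $\|\WW\cdot\|_1$ at $\ee_j$ meets $\nullspace{\AAA}^\perp$. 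Recall that this subdifferential is the set of vectors $\zz$ with $z_j = w_j$ and $|z_i|\le w_i$ for $i\neq j$. So the whole argument reduces to producing one vector $\zz\in\nullspace{\AAA}^\perp$ with these coordinate properties, and then upgrading optimality to uniqueness.

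For the certificate I would take $\zz := \PP\ee_j/\|\PP\ee_j\|_2$, which lies in $\operatorname{range}(\PP)=\nullspace{\AAA}^\perp$ because $\PP$ is the orthogonal projection onto $\nullspace{\AAA}^\perp$. Using $\PP^2=\PP=\PP^T$ one checks $z_j = \ee_j^T\PP\ee_j/\|\PP\ee_j\|_2 = \|\PP\ee_j\|_2^2/\|\PP\ee_j\|_2 = w_j$, exactly the required value. For $i\neq j$, $z_i = (\PP\ee_i)^T(\PP\ee_j)/\|\PP\ee_j\|_2$, so Cauchy--Schwarz gives $|z_i|\le\|\PP\ee_i\|_2 = w_i$; this is precisely where the definition \eqref{eq:Wmatrix} of $\WW$ is perfectly tuned to the estimate. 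Moreover Cauchy--Schwarz is strict unless $\PP\ee_i$ and $\PP\ee_j$ are parallel, which is excluded by \eqref{Pnonpar}; hence $|z_i|<w_i$ for every $i\neq j$. Thus $\zz$ is a subgradient lying in $\nullspace{\AAA}^\perp$, and since $\zz^T\mathbf{q} = 0$ for all $\mathbf{q}\in\nullspace{\AAA}$, convexity yields $\|\WW(\ee_j+\mathbf{q})\|_1 \ge \|\WW\ee_j\|_1 + \zz^T\mathbf{q} = \|\WW\ee_j\|_1$, so $\ee_j$ is a minimizer.

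Uniqueness is where the \emph{strict} inequalities pay off, and is the step I expect to require the most care. Let $\xx^\star = \ee_j + \mathbf{q}$ be any minimizer. Since $\zz^T\mathbf{q} = 0$ and $\zz^T\ee_j = w_j = \|\WW\ee_j\|_1$, I would chain $\|\WW\ee_j\|_1 = \zz^T\xx^\star = \sum_i z_i x_i^\star \le \sum_i w_i|x_i^\star| = \|\WW\xx^\star\|_1 = \|\WW\ee_j\|_1$, forcing equality in every term. For $i\neq j$ the strict bound $|z_i|<w_i$ then forces $x_i^\star = 0$, so $\xx^\star = x_j^\star\ee_j$; consequently $\xx^\star - \ee_j = (x_j^\star-1)\ee_j \in \nullspace{\AAA}$, and since Assumption \ref{nonpar} guarantees $\ee_j\notin\nullspace{\AAA}$ we obtain $x_j^\star = 1$, i.e.\ $\xx^\star = \ee_j$. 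The only genuine subtlety is the non‑parallel hypothesis: without \eqref{Pnonpar} the Cauchy--Schwarz step could be tight and the minimizer need not be unique, so this assumption is exactly what converts the soft optimality certificate into a strict one.
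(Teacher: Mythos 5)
Your proof is correct, and it takes a genuinely different route from the paper's. The paper argues entirely on the primal side: for any feasible $\xx=\sum_i c_i\ee_i$ with $\xx\ne\ee_j$ it writes
$\|\WW\ee_j\|_1=\|\PP\ee_j\|_2=\left\|\sum_i c_i\PP\ee_i\right\|_2\le\sum_i|c_i|\,\|\PP\ee_i\|_2=\|\WW\xx\|_1$,
a single application of the triangle inequality, with strictness (hence uniqueness) coming from the non-parallelism property \eqref{Pnonpar}. You instead construct the dual certificate $\zz=\PP\ee_j/\|\PP\ee_j\|_2\in\nullspace{\AAA}^\perp$, verify the subdifferential conditions by Cauchy--Schwarz, and extract uniqueness from the strict bounds $|z_i|<w_i$, $i\ne j$; all steps check out, including the final use of $\ee_j\notin\nullspace{\AAA}$ to pin down the coefficient $x_j^\star=1$. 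The two arguments are dual views of the same estimate --- the paper's triangle-inequality step is recovered by pairing $\sum_i c_i\PP\ee_i$ against your unit vector $\zz$ --- but the packaging buys different things. The paper's proof is shorter and completely elementary. Your certificate computation is, in effect, an independent derivation of the maximum property \eqref{eq:maxprop}: the strict bound $|z_i|<w_i$ for $i\ne j$ is equivalent to $\left|[\WW^{-1}\PP\ee_j]_i\right|<[\WW^{-1}\PP\ee_j]_j$, a property the paper imports from \cite{Elv20,Elv20a2} and only invokes later, in the proofs of Theorems \ref{thm:Palpha} and \ref{thm:Pnoise}. Your approach therefore places Theorem \ref{thm:exactRecov} in the same subdifferential framework the paper uses for its regularized problems with $\alpha>0$, which makes the passage from the constrained (Problem I/II) to the penalized (Problem III/IV) results more transparent, and it is the formulation that would generalize most directly to questions about recovering $s$-sparse vectors.
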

\begin{proof}
  Problem I and Problem II are equivalent. We choose to consider the latter, i.e.,
    \begin{equation*}
      \min_{\xx\in\Rn} \|\WW\xx\|_1 \quad \textnormal{subject to} \quad \PP\xx = \PP\ee_j.
  \end{equation*}
  Define $X_j = \{\xx\in\Rn: \PP\xx = \PP\ee_j\}$. Let $\xx = \sum_i c_i\ee_i\in X_j, \xx \neq \ee_j$ be arbitrary. Then, see \eqref{eq:Wmatrix},
  \begin{eqnarray*}
    \|\WW\ee_j\|_1 &=& \left\| \|\PP\ee_j\|_2 \ee_j \right\|_1 \\
    &=& \|\PP\ee_j\|_2 \\
    &=& \left\|\PP\left(\sum_i c_i\ee_i\right)\right\|_2 \\
    &=& \left\|\sum_i c_i\PP\ee_i\right\|_2 \\
    &\leq& \sum_i |c_i| \|\PP\ee_i\|_2 \\
    &=& \left\| \sum_i c_i \|\PP\ee_i\|_2 \ee_i \right\|_1\\
    &=& \|\WW\xx\|_1.
  \end{eqnarray*}
Using \eqref{Pnonpar}, which is a consequence of Assumption \ref{nonpar}, we get strict inequality in the third to last step, and the result follows.
\end{proof}
\noindent Theorem \ref{thm:exactRecov} states that a single basis vector $\ee_j$ can be exactly recovered from the data $\bb^\dagger = \AAA\ee_j$ by solving either Problem I or Problem II. If Assumption \ref{nonpar} does not hold, $\ee_j$ would still be a minimizer, but the uniqueness is not assured. (Similar statements hold for the remaining results presented in this paper.)

\subsection{Alternative optimization problems}
\label{subsec:altOpt}
%As far as the authors understand, proving Theorem \ref{thm:exactRecov} in terms of Problem I becomes more involved than the argument presented above. 
Inspired by Problem II, we now suggest an alternative to \eqref{eq:BF1}. 
Since $\PP = \AAA^\dagger\AAA$, it follows that 
\[
\PP \ee_j = \AAA^\dagger\AAA\ee_j =  \AAA^\dagger \bb^\dagger,
\]
where $\bb^\dagger = \AAA\ee_j$.
Consequently, for a general right-hand-side $\bb$, Problem II motivates the following alternative to \eqref{eq:BF1} 
\begin{equation}
    \label{eq:BF2}
    \min_{\xx\in\Rn} \left\{ \frac{1}{2} \|\PP\xx - \AAA^\dagger \bb\|_2^2 + \alpha\|\WW\xx\|_1 \right\}.
\end{equation}
If $\AAA$ has very small singular values, it may not be advisable to apply $\AAA^\dagger$ in practise. Therefore we want to approximate  $\AAA^\dagger$ with a more well-behaved matrix. This can, e.g., be accomplished by employing the truncated SVD to get an approximation $\AAK$ of $\AAA$. Here, $k \leq m$ represents the number of singular values that are unchanged in the truncation: Assuming that the singular values of $\AAA$ are sorted in decreasing order $\sigma_1 \geq \sigma_2 \geq \ldots \geq \sigma_m$, $\AAK$ will have the singular values $\sigma_1 \geq \sigma_2 \geq \ldots \geq \sigma_k \geq 0 = 0 = \ldots = 0$. 

Below we need the orthogonal projection onto the orthogonal complement of the null space of $\AAK$, 
\begin{equation}
\label{eq:defPK}
\PPK: \mathbb{R}^n \rightarrow \nullspace{\AAK}^\perp. 
\end{equation}
And, analogously to \eqref{eq:Wmatrix}, we define 
\begin{equation}\label{eq:WKmatrix}
    \WWK \ee_i = \|\PPK \ee_i\|_2 \ee_i, \, i=1,2,\ldots,n.
\end{equation}
Replacing $\AAA^\dagger$ in \eqref{eq:BF2} with $\AAK^\dagger$, keeping in mind that $\PP = \AAA^\dagger\AAA$, leads to the following alternative to \eqref{eq:BF1} 
      \begin{equation} \label{eq:BF3}
        \min_{\xx\in\Rn} \left\{ \frac{1}{2} \|\AAK^\dagger \AAA\xx - \AAK^\dagger \bb\|_2^2 + \alpha\|\WWK\xx\|_1 \right\}.
      \end{equation}
Note that we have also replaced $\WW$ with $\WWK$. 

From the SVD of $\AAA=\UU \SI \VV^T$, we find the SVD of $\AAK$, 
\[
\AAK = \UU \SIK \VV^T. 
\]
Observe that 
\[
\PPK=\AAK^\dagger \AAK = \VV \SIK^\dagger \SIK \VV^T, 
\]
and that 
\[
\AAK^\dagger \AAA = \VV \SIK^\dagger \SI \VV^T. 
\]
This yields, since $\SIK^\dagger \SIK = \SIK^\dagger \SI$,
\begin{equation}
\label{eq:projectionProperty}
    \PPK=\AAK^\dagger \AAA, 
\end{equation}
and we can write \eqref{eq:BF3} in the form 
\begin{equation}
\label{eq:BF4}
        \min_{\xx\in\Rn} \left\{ \frac{1}{2} \|\PPK \xx - \AAK^\dagger \bb\|_2^2 + \alpha\|\WWK\xx\|_1 \right\}.
\end{equation}

In the next subsections we will analyze \eqref{eq:BF2} and \eqref{eq:BF4} when $\bb = \AAA\ee_j$ and $\bb = \AAA\ee_j + \eta$, respectively, where $\eta$ represents noise.

\begin{comment}
If $\AAA$ has very small singular values, it may not be advisable to apply $\AAA^\dagger$ in practise. Therefore we want to approximate  $\AAA^\dagger$ with a more well-behaved matrix. This can, e.g., be accomplished by considering the Tikhonov regularized problem
\begin{equation*}
    \min_{\hxx\in\Rn} \frac{1}{2}\|\AAA\hxx-\bb\|_2^2 + \frac{1}{2}\halpha\|\hxx\|_2^2,
\end{equation*}
which has the first-order optimality condition
\begin{equation*}
    \AAA^T\AAA\hxx + \halpha\hxx = \AAA^T\bb.
\end{equation*}
Using the singular value decomposition $\AAA = \mathsf{U}\Sigma\mathsf{V}^T$ of $\AAA$, we get the expression
\begin{equation*}
  \hxx_\halpha = \mathsf{V}(\Sigma^2 + \halpha\mathsf{I})^{-1}\mathsf{V}^T \AAA^T \bb 
  = \SSa\bb,
\end{equation*}
where 
\begin{equation}\label{eq:Sa}
\SSa = \mathsf{V}(\Sigma^2 + \halpha\mathsf{I})^{-1}\mathsf{V}^T \AAA^T \approx \AAA^\dagger. 
\end{equation}
Replacing $\AAA^\dagger$ in \eqref{eq:BF2} with $\SSa$, keeping in mind that $\PP = \AAA^\dagger\AAA$, leads to the following alternative to \eqref{eq:BF1} and \eqref{eq:BF2}
      \begin{equation} \label{eq:BF3}
        \min_{\xx\in\Rn} \|\SSa\AAA\xx - \SSa \bb\|_2^2 + \alpha\|\WW\xx\|_1.
      \end{equation}
In the next subsection we will analyze \eqref{eq:BF2} and \eqref{eq:BF3} when $\bb = \AAA\ee_j$.
\end{comment}

\subsection{Analysis of regularized problems} 
\textcolor{black}{
 In this subsection we will make use of the following maximum property derived in \cite{Elv20} and \cite{Elv20a2}:
 \begin{equation}\label{eq:maxprop}
     j = \argmax_{i \in \{1,2,...,n\}} |[\WW^{-1}\PP\ee_j]_i|, 
 \end{equation}
 where $[\WW^{-1}\PP\ee_j]_i$ denotes the $i$'th component of the vector $\WW^{-1}\PP\ee_j$, and $\PP$ and $\WW$ are defined in \eqref{eq:Pmatrix} and \eqref{eq:Wmatrix}, respectively. 
 More precisely, the proof of Theorem 4.2 in \cite{Elv20} reveals that 
 \begin{eqnarray}
 % \nonumber
  \WW^{-1}\PP\ee_j %&=& \WW^{-1}\sum_{i=1}^n (\PP\ee_j,\ee_i)\ee_i \\
 %  \nonumber
 %  &=& \sum_{i=1}^n(\PP\ee_j,\ee_i)\|\PP\ee_i\|^{-1}\ee_i \\
 %  \nonumber
 %  &=& \sum_{i=1}^n(\PP\ee_j,\PP\ee_i)\|\PP\ee_i\|^{-1}\ee_i \\
   \label{eq:revision1}
  = \|\PP\ee_j\|\sum_{i=1}^n\left(\frac{\PP\ee_j}{\|\PP\ee_j\|}, \frac{\PP\ee_i}{\|\PP\ee_i\|}\right)\ee_i,
 \end{eqnarray}
which combined with Assumption \ref{nonpar} yields \eqref{eq:maxprop}, see \cite{Elv20a2} for further details. 
}
 
With $\bb = \AAA \ee_j$, \eqref{eq:BF2} reads 
\begin{itemize}
    \item \textbf{Problem III:}
      \begin{equation} \label{eq:PPmin}
        \min_{\xx\in\Rn} \left\{ \frac{1}{2} \|\PP\xx - \PP\ee_j\|_2^2 + \alpha\|\WW\xx\|_1 \right\}.
      \end{equation}
\end{itemize}
\noindent We will now see that the maximum property \eqref{eq:maxprop} allows an analysis of this problem which only involves   classical convex optimization theory.

\begin{theorem}\label{thm:Palpha}
    Assume that the matrix $\AAA \in \Rmn$ satisfies Assumption \ref{nonpar}, and let $\PP$ and $\WW$ be the matrices defined in \eqref{eq:Pmatrix} and \eqref{eq:Wmatrix}, respectively. Then $$\xx^*_\alpha = \gam \ee_j$$ is the unique solution of Problem III, where
    \begin{equation}\label{eq:gamma}
        \gam = 1 - \frac{\alpha}{[\WW^{-1}\PP\ee_j]_j} \quad \mbox{for } 0 < \alpha < [\WW^{-1}\PP\ee_j]_j.
    \end{equation}
\end{theorem}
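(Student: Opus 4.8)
The plan is to treat Problem III as a convex (though not strictly convex) optimization problem and to exploit its first-order optimality (subdifferential) conditions together with the maximum property \eqref{eq:maxprop}. Writing the objective as $J(\xx) = \frac{1}{2}\|\PP\xx - \PP\ee_j\|_2^2 + \alpha\|\WW\xx\|_1$, I would first note that, since $\PP$ is an orthogonal projection ($\PP^T = \PP$, $\PP^2 = \PP$), the gradient of the smooth term is $\PP(\xx - \ee_j)$, while the subdifferential of $\alpha\|\WW\xx\|_1 = \alpha\sum_i w_i|x_i|$ (with $w_i = \|\PP\ee_i\|_2$) consists of the vectors $\mathbf{h}$ whose $i$-th component equals $\alpha w_i\,\mathrm{sign}(x_i)$ where $x_i \neq 0$ and lies in $\alpha w_i[-1,1]$ where $x_i = 0$. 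Thus $\xx^*$ is a minimizer if and only if there is such an $\mathbf{h}$ with $\PP(\xx^* - \ee_j) + \mathbf{h} = 0$.

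Second, I would verify directly that $\xx^* = \gam\ee_j$ satisfies this condition. Substituting gives the forced candidate $\mathbf{h} = (1-\gam)\PP\ee_j = \frac{\alpha}{[\WW^{-1}\PP\ee_j]_j}\PP\ee_j$, and I would then check component-wise that it is an admissible subgradient. Dividing the $i$-th component by $\alpha w_i$ yields exactly the ratio $[\WW^{-1}\PP\ee_j]_i / [\WW^{-1}\PP\ee_j]_j$, using $(\PP\ee_j,\ee_i) = [\PP\ee_j]_i$ and the formula for $\WW^{-1}\PP\ee_j$ recalled just above the theorem. The strict maximum property \eqref{eq:maxprop}, which follows from Assumption \ref{nonpar}, shows this ratio has modulus at most $1$, with equality only for $i=j$. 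Hence $h_j = \alpha w_j$ (matching $\mathrm{sign}(\gam) = +1$, since $0 < \alpha < [\WW^{-1}\PP\ee_j]_j$ forces $\gam \in (0,1)$) while $|h_i| < \alpha w_i$ strictly for $i \neq j$, confirming optimality. I would also record that $[\WW^{-1}\PP\ee_j]_j = (\PP\ee_j,\ee_j)/\|\PP\ee_j\|_2 = \|\PP\ee_j\|_2 > 0$, so the stated range of $\alpha$ is precisely what makes $\gam$ positive.

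Third --- and this is the step I expect to be the main obstacle --- I would establish uniqueness, which does not follow from strict convexity because $J$ is flat along $\mathcal{N}(\PP)$ and the $\ell^1$ term is only piecewise linear. I would use the standard two-part argument. (a) All minimizers share the same projected fit: if $\xx^*$ and $\yy$ are minimizers then so is their midpoint, and equating objective values forces equality in the convexity inequality for the strictly convex map $\mathbf{v} \mapsto \frac{1}{2}\|\mathbf{v} - \PP\ee_j\|_2^2$ at the points $\PP\xx^*$ and $\PP\yy$; since $\PP$ is linear this yields $\PP\yy = \PP\xx^* = \gam\PP\ee_j$. (b) Consequently $\PP(\yy - \ee_j) = \PP(\xx^* - \ee_j)$, so the optimality condition at $\yy$ must be certified by the \emph{same} subgradient $\mathbf{h}$ found above, i.e.\ $\mathbf{h} \in \partial(\alpha\|\WW\,\cdot\,\|_1)(\yy)$. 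But $\mathbf{h}$ is strictly interior off the index $j$ ($|h_i| < \alpha w_i$ for $i \neq j$), whereas $y_i \neq 0$ would force $|h_i| = \alpha w_i$; hence $y_i = 0$ for all $i \neq j$. Thus $\yy = y_j\ee_j$, and $\PP\yy = \gam\PP\ee_j$ together with $\PP\ee_j \neq 0$ pins down $y_j = \gam$. Therefore $\yy = \xx^* = \gam\ee_j$, which is the claimed unique solution.
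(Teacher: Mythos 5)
Your proof is correct, and its first half (verifying that $\gam\ee_j$ satisfies the subdifferential optimality condition, with the dual certificate $\mathbf{h}=(1-\gam)\PP\ee_j$ whose components, after division by $\alpha w_i$, are exactly the ratios $[\WW^{-1}\PP\ee_j]_i/[\WW^{-1}\PP\ee_j]_j$ controlled by \eqref{eq:maxprop}) is essentially identical to the paper's existence argument. Where you genuinely diverge is uniqueness. The paper takes an arbitrary competitor $\yy\neq\xx_\alpha^*$, singles out a coordinate $k\neq j$ with $y_k\neq 0$, perturbs the $k$-th entry of the certificate to $\pm 1$ to build a modified subgradient $\tilde{\mathbf{z}}\in\partial h(\WW\xx_\alpha^*)$, and then chains the subgradient inequality with the convexity of the fidelity term to obtain the strict quantitative inequality $\mathcal{J}(\yy)>\mathcal{J}(\xx_\alpha^*)$ directly. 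You instead run the standard compressed-sensing-style argument: (a) the minimizer set is convex and the map $\mathbf{v}\mapsto\frac12\|\mathbf{v}-\PP\ee_j\|_2^2$ is strictly convex, so all minimizers share the same image under $\PP$; (b) hence every minimizer is certified by the \emph{same} strictly interior certificate $\mathbf{h}$, forcing its support into $\{j\}$; (c) injectivity of $\PP$ on $\mathrm{span}\{\ee_j\}$ (i.e., $\PP\ee_j\neq 0$) pins the coefficient to $\gam$. Both hinge on the same key ingredient, the strict maximum property \eqref{eq:maxprop}; your route is more modular and reuses off-the-shelf convex-analysis facts (convexity of the solution set, uniqueness of the projected fit), which makes it shorter and easier to generalize to other data vectors, while the paper's route is more self-contained and yields an explicit strict gap in objective value for every competitor without invoking the midpoint lemma. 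One small point worth making explicit in your step (b): the identity $-\nabla g(\yy)=\PP\ee_j-\PP\yy=\PP\ee_j-\PP\xx_\alpha^*=\mathbf{h}$ is what guarantees the certificate at $\yy$ is literally the same vector $\mathbf{h}$, and this uses $\PP^T\PP=\PP$ exactly as in the paper.
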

\begin{proof} $ $ \newline 
  \textit{Existence:} Let us define the cost-functional $\mathcal{J}: \Rn \rightarrow \mathbb{R}$ associated with \eqref{eq:PPmin}, 
  \begin{equation} \label{eq:def_J_g_h}
      \mathcal{J}(\xx) = \underbrace{\frac{1}{2}\|\PP\xx-\PP\ee_j\|_2^2}_{=g(\xx)}
      + \underbrace{\alpha\|\WW\xx\|_1}_{=\alpha h(\WW\xx)},
  \end{equation}
  where $g(\cdot)$ and $h(\WW\cdot)$ represent the fidelity and regularization terms, respectively.
  According to standard convex optimization theory, $\xx$ is a minimizer of $\mathcal{J}$ if and only if
  \begin{eqnarray*}
      \mathbf{0} &\in& \partial\mathcal{J}(\xx) \\
      &=& \nabla g(\xx) + \alpha\WW^T\partial h(\WW\xx), %\\
     % &=& \PP^T\PP\xx - \PP^T\PP\ee_j \\
     % &=& \alpha\WW\partial\|\WW\xx\|_1 + \PP\xx - \PP\ee_j,
  \end{eqnarray*}
  where "$\partial$" denotes the subgradient. 
  Since $\WW^T = \WW$, we can multiply with $\WW^{-1}$ to obtain
  \begin{equation*}
      %\mathbf{0} \in \alpha\partial\|\WW\xx\|_1 + \WW^{-1}\PP(\xx - \ee_j), 
     -\WW^{-1}\nabla g(\xx) \in \alpha\partial h(\WW\xx),
  \end{equation*}
  and from the expression for $g$ we find, keeping in mind that $\PP^T \PP = \PP \PP = \PP$, 
  \begin{equation}\label{eq:gradG}
     \WW^{-1}\PP(\ee_j - \xx) \in \alpha \partial h(\WW\xx).
  \end{equation}  
  We also observe, using the fact that $h(\yy)=\| \yy \|_1$ and that $\WW$ is a diagonal matrix with positive entries at its diagonal,  
  \begin{equation*}
     [\partial h(\WW\xx)]_i =  [\partial h(\WW [x_1 \, x_2 \, \ldots \, x_n]^T)]_i =  
        \begin{cases} 
            \{1\}, & x_i > 0, \\
            \{-1\}, & x_i < 0, \\
            [-1,1], & x_i = 0.
        \end{cases}
  \end{equation*}
  
  We will now investigate whether there exists a scalar $\gamma$ such that $\xx = \gamma\ee_j$ satisfies the optimality criterion  \eqref{eq:gradG}. Note that, for $\gamma >0$,  
  \begin{equation} \label{eq:BFslow2}
  [\partial h(\WW\gamma \ee_j)]_i =         
        \begin{cases} 
            \{1\}, & i = j, \\
            [-1,1], & i \neq j,
        \end{cases}
  \end{equation}
  %Consequently, by combining this expression with \eqref{eq:gradG}, we conclude that $\xx = \gamma\ee_j$ is a minimizer of $\mathcal{J}$ if and only if there exists a value for $\gamma$ such that 
  and the condition \eqref{eq:gradG}, with $\xx = \gamma\ee_j$, becomes   
  \begin{equation} \label{eq:BFneedsThisToUnderstand}
      (1-\gamma)[\WW^{-1}\PP\ee_j]_i \in \alpha
        \begin{cases} 
            \{1\}, & i = j, \\
            [-1,1], & i \neq j.
        \end{cases}
  \end{equation}
  Setting
  \begin{equation}
  \label{eq:gammaDef}
      \gamma = \gam = 1 - \frac{\alpha}{[\WW^{-1}\PP\ee_j]_j},
  \end{equation}
  we observe from \eqref{eq:maxprop} that
    \begin{equation}\label{eq:uniquecond}
      (1-\gam)[\WW^{-1}\PP\ee_j]_i = \alpha \, \frac{[\WW^{-1}\PP\ee_j]_i}{[\WW^{-1}\PP\ee_j]_j}  \in \alpha
        \begin{cases} 
            \{1\}, & i = j, \\
            (-1,1), & i \neq j, 
        \end{cases}
  \end{equation}
  and we conclude that \eqref{eq:BFneedsThisToUnderstand} holds for the particular choice \eqref{eq:gammaDef} of $\gamma$. 
  
  This argument shows that $\xx_\alpha^* = \gam\ee_j$ is a minimizer of $\mathcal{J}$. The next step is to use the property that $(1-\gam)[\WW^{-1}\PP\ee_j]_i$ is contained in the \underline{open} interval $(-\alpha,\alpha)$, $i \neq j$, to prove the uniqueness. 
  \newline \newline \noindent 
  \textit{Uniqueness:} 
  We have determined a minimizer $\xx_\alpha^* = \gam\ee_j$ of $\mathcal{J}$. Let $\mathbf{y} \in \Rn, \mathbf{y} \neq \xx_\alpha^*$, be arbitrary. We will show that
  \begin{equation*}
      \mathcal{J}(\yy) > \mathcal{J}(\xx_\alpha^*).
  \end{equation*}
  If $\mathbf{y} = c\xx_\alpha^*$, $c \neq 1$, it follows from the analysis presented above that this is not a minimizer of $\mathcal{J}$. Consequently, for the remaining part of the proof we assume that $\mathbf{y} \neq c\xx_\alpha^*$. In particular, this implies that at least one of the components, say $y_k$, $k \neq j$, of $\mathbf{y}$ is such that $y_k \neq 0$.
 
 Recall the definition \eqref{eq:def_J_g_h} of $\mathcal{J}$, $g$ and $h$ and that, using the definition of the subgradient,  
 \[
 h(\WW \yy) - h(\WW \xx_\alpha^*) \geq \mathbf{z}^T(\WW\yy - \WW\xx_\alpha^*) \quad  \mbox{for all }  \mathbf{z} \in \partial h(\WW\xx_\alpha^*). 
 \]
 Therefore 
 \begin{align}
 \nonumber
    \mathcal{J}(\mathbf{y}) - \mathcal{J}(\xx_\alpha^*) 
    &= g(\mathbf{y}) + \alpha h(\WW\mathbf{y}) - g(\xx_\alpha^*) - \alpha h(\WW\xx_\alpha^*) \\
    \nonumber
    &\geq\frac{1}{2}\|\PP\mathbf{y}-\PP\ee_j\|_2^2 - \frac{1}{2}\|\PP\xx_\alpha^*-\PP\ee_j\|_2^2 \\
    \label{eq:BFslow1}
    &\quad +\alpha \mathbf{z}^T \WW (\yy - \xx_\alpha^*) \quad  \mbox{for all }  \mathbf{z} \in \partial h(\WW\xx_\alpha^*).
  \end{align}
 Since $\xx_\alpha^* = \gam\ee_j$, we can write \eqref{eq:uniquecond} as
    \begin{align}
    \label{eq:BFslow3}
      [\WW^{-1}\PP(\ee_j-\xx_\alpha^*)]_i &\in \alpha
        \begin{cases} 
            \{1\}, & i = j \\
            (-1,1), & i \neq j 
        \end{cases} \\
        \nonumber
        &\subset \alpha
        \begin{cases} 
            \{1\}, & i = j \\
            [-1,1], & i \neq j 
        \end{cases} \\
        \nonumber
        &= \alpha [\partial h(\WW\xx_\alpha^*)]_i,
  \end{align}  
  see \eqref{eq:BFslow2}, i.e., 
  \begin{equation}\label{eq:insubdiff}
      \frac{1}{\alpha}\WW^{-1}\PP(\ee_j - \xx_\alpha^*) \in \partial h(\WW\xx_\alpha^*).
  \end{equation}
  Hence, we could choose $\mathbf{z} = \frac{1}{\alpha}\WW^{-1}\PP(\ee_j - \xx_\alpha^*)$ in \eqref{eq:BFslow1}, but then we do not (directly) get a strict inequality. 
  Recall that $\mathbf{y}$ has a component $y_k \neq 0$, where $k \neq j$. Without loss of generality, we may assume that $[\WW\mathbf{y}-\WW\xx_\alpha^*]_k > 0$. 
  %From \eqref{eq:BFslow2} it follows that there exists $\mathbf{z} \in \partial h(\WW\xx_\alpha^*)$ with $z_k = 1$. 
  Define $\mathbf{\tilde{z}}=[\tilde{z}_1 \, \tilde{z}_2 \, \ldots \tilde{z}_n]^T$ as follows\footnote{If $[\WW\mathbf{y}-\WW\xx_\alpha^*]_k < 0$, define $\tilde{z}_k = -1$, etc.} 
  \begin{equation*}
      \tilde{z}_i = 
      \begin{cases} 
            1, & i = k, \\
            \frac{1}{\alpha}[\WW^{-1}\PP(\ee_j-\xx_\alpha^*)]_i, & i \neq k.  
        \end{cases} 
  \end{equation*}
Since \eqref{eq:BFslow3} implies that $\left|\frac{1}{\alpha}[\WW^{-1}\PP(\ee_j-\xx_\alpha^*)]_k\right| < 1$, we find that
    \begin{equation*}
        \mathbf{\tilde{z}}^T [\WW\mathbf{y}-\WW\xx_\alpha^*] > \frac{1}{\alpha}[\WW^{-1}\PP(\ee_j-\xx_\alpha^*)]^T [\WW\mathbf{y}-\WW\xx_\alpha^*].
    \end{equation*}
Due to \eqref{eq:BFslow3} and \eqref{eq:BFslow2}, $\mathbf{\tilde{z}} \in \partial h(\WW\xx_\alpha^*)$, and therefore \eqref{eq:BFslow1} yields  
 \begin{align*}
 %\nonumber
    \mathcal{J}(\mathbf{y}) - \mathcal{J}(\xx_\alpha^*) 
    &\geq\frac{1}{2}\|\PP\mathbf{y}-\PP\ee_j\|_2^2 - \frac{1}{2}\|\PP\xx_\alpha^*-\PP\ee_j\|_2^2 \\
    %\label{XX}
    &\quad +\alpha \mathbf{\tilde{z}}^T \WW (\yy - \xx_\alpha^*) \\
    &> \frac{1}{2}\|\PP\mathbf{y}-\PP\ee_j\|_2^2 - \frac{1}{2}\|\PP\xx_\alpha^*-\PP\ee_j\|_2^2 \\
    &\quad + \alpha \frac{1}{\alpha}[\WW^{-1}\PP(\ee_j-\xx_\alpha^*)]^T [\WW\mathbf{y}-\WW\xx_\alpha^*] \\
    &= \frac{1}{2}\|\PP\mathbf{y}-\PP\ee_j\|_2^2 - \frac{1}{2}\|\PP\xx_\alpha^*-\PP\ee_j\|_2^2 \\
    &\quad + [\PP(\ee_j-\xx_\alpha^*)]^T [\mathbf{y}-\xx_\alpha^*].
  \end{align*}
The gradient of $g$, see \eqref{eq:def_J_g_h}, is $\nabla g(x)=\PP (\xx-\ee_j)$. Consequently, the convexity of $g$ implies that 
\begin{align*}
 %\nonumber
    \mathcal{J}(\mathbf{y}) - \mathcal{J}(\xx_\alpha^*) 
    &> \frac{1}{2}\|\PP\mathbf{y}-\PP\ee_j\|_2^2 - \frac{1}{2}\|\PP\xx_\alpha^*-\PP\ee_j\|_2^2 \\
    &\quad - \nabla g(\xx_\alpha^*)^T [\mathbf{y}-\xx_\alpha^*] \\
    &\geq \frac{1}{2}\|\PP\mathbf{y}-\PP\ee_j\|_2^2 - \frac{1}{2}\|\PP\xx_\alpha^*-\PP\ee_j\|_2^2 \\
    &\quad - [g(\mathbf{y})-g(\xx_\alpha^*)] \\
    &=\frac{1}{2}\|\PP\mathbf{y}-\PP\ee_j\|_2^2 - \frac{1}{2}\|\PP\xx_\alpha^*-\PP\ee_j\|_2^2 \\ 
    &\quad - \left[ \frac{1}{2}\|\PP\mathbf{y}-\PP\ee_j\|_2^2 - \frac{1}{2}\|\PP\xx_\alpha^*-\PP\ee_j\|_2^2 \right] \\ 
    &=0, 
  \end{align*}
  which finishes the proof. 
\end{proof}

This theorem shows that $\xx_\alpha^* = \gam\ee_j$ is the unique minimizer of \eqref{eq:PPmin}.
The solution of \eqref{eq:PPmin} is thus obtained by only changing the magnitude of the true source $\ee_j$, where the scaling factor $\gam \rightarrow 1$ as $\alpha \rightarrow 0$. 

\subsubsection{Noisy observation data}
As mentioned in Subsection \ref{subsec:altOpt}, it may not be advisable to apply the pseudo-inverse $\AAA^\dagger$ in practical computations. We therefore now want to study \eqref{eq:BF3} in more detail. Setting $\bb=\AAA \ee_j + \eta$ in \eqref{eq:BF4}, where $\eta \in \mathbb{R}^m$ represents noise, leads to 
\[
\min_{\xx\in\Rn} \left\{ \frac{1}{2} \|\PPK \xx - (\AAK^\dagger \AAA\ee_j + \AAK^\dagger \eta)\|_2^2 + \alpha\|\WWK \xx\|_1 \right\}. 
\]
Since $\AAK^\dagger \AAA = \PPK$, see \eqref{eq:projectionProperty}, this problem can also can be written in the form 
\begin{itemize}
    \item \textbf{Problem IV:} 
      \begin{equation} \label{eq:PPminnoise}
        \min_{\xx\in\Rn} \left\{ \frac{1}{2} \|\PPK \xx - (\PPK \ee_j + \AAK^\dagger \eta )\|_2^2 + \alpha\|\WWK \xx\|_1 \right\}.
      \end{equation}
\end{itemize}

\begin{theorem}\label{thm:Pnoise}
    Assume that $\AAK \in \Rmn$ satisfies Assumption \ref{nonpar}, and let $\PPK$ and $\WWK$ be the matrices defined in \eqref{eq:defPK} and \eqref{eq:WKmatrix}, respectively. Then $$\xx^*_{\alpha,\eta} = \gamn \ee_j$$ is the unique solution of Problem IV, where 
    \begin{equation}\label{eq:gammanoise}
        \gamn = 1 - \frac{\alpha + [\WWK^{-1}\AAK^\dagger\eta]_j}{[\WWK^{-1}\PPK\ee_j]_j}.
    \end{equation}    
    In order for this to hold, $\alpha$ must obey
  \begin{equation}
  \label{ineq:bounds}
      \max_{i \neq j} \frac{1 + |\tau_{ij}|}{1-|\tau_{ij}|} \max_i \left|[\WWK^{-1}\AAK^\dagger\eta]_i\right| < \alpha < [\WWK^{-1}\PPK\ee_j]_j - [\WWK^{-1}\AAK^\dagger\eta]_j,
  \end{equation}
  where $$\tau_{ij} = \frac{[\WWK^{-1}\PPK\ee_j]_i}{[\WWK^{-1}\PPK\ee_j]_j} \in \left(-1,1\right), \quad i\neq j.$$
\end{theorem}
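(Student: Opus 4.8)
The plan is to mirror the proof of Theorem \ref{thm:Palpha}, replacing $\PP,\WW$ by $\PPK,\WWK$ and carrying the extra term $\AAK^\dagger\eta$ through the optimality condition. First I would record the two algebraic facts that make the computation collapse: $\PPK$ is an orthogonal projection, so $\PPK^T\PPK=\PPK$, and, because the range of $\AAK^\dagger$ equals $\nullspace{\AAK}^\perp$, which is exactly the range of $\PPK$ (cf.\ \eqref{eq:projectionProperty}), we have $\PPK\AAK^\dagger\eta=\AAK^\dagger\eta$. Writing $g(\xx)=\tfrac12\|\PPK\xx-(\PPK\ee_j+\AAK^\dagger\eta)\|_2^2$, these identities give $\nabla g(\xx)=\PPK(\xx-\ee_j)-\AAK^\dagger\eta$. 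The subgradient optimality condition $\mathbf{0}\in\nabla g(\xx)+\alpha\WWK\,\partial h(\WWK\xx)$, multiplied by $\WWK^{-1}$, then reads
\[
\WWK^{-1}\PPK(\ee_j-\xx)+\WWK^{-1}\AAK^\dagger\eta\in\alpha\,\partial h(\WWK\xx).
\]

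For existence I would test the ansatz $\xx=\gamma\ee_j$ with $\gamma>0$, so that $\partial h(\WWK\gamma\ee_j)$ equals $\{1\}$ in component $j$ and $[-1,1]$ otherwise, exactly as in \eqref{eq:BFslow2}. The $j$-th component of the inclusion is then a single scalar equation whose solution is the stated $\gamn$ in \eqref{eq:gammanoise}; the upper bound in \eqref{ineq:bounds} is precisely the requirement that this $\gamn$ be positive, so that the sign assumption underlying the ansatz is consistent. For each $i\neq j$ the inclusion requires the $i$-th component to lie in $[-\alpha,\alpha]$; substituting $\gamn$ and the definition of $\tau_{ij}$ turns this into
\[
\left|(\alpha+[\WWK^{-1}\AAK^\dagger\eta]_j)\,\tau_{ij}+[\WWK^{-1}\AAK^\dagger\eta]_i\right|\le\alpha .
\]
Here I would invoke the strong maximum property \eqref{eq:maxprop}, applied to $\AAK$ (which satisfies Assumption \ref{nonpar} by hypothesis), to guarantee both $[\WWK^{-1}\PPK\ee_j]_j>0$ and $\tau_{ij}\in(-1,1)$, so that $1-|\tau_{ij}|>0$. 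Bounding the left-hand side by the triangle inequality and by $\max_i|[\WWK^{-1}\AAK^\dagger\eta]_i|$ then shows that the lower bound in \eqref{ineq:bounds} is sufficient, and in fact forces the inequality to be strict, which is what the uniqueness step needs. A convenient consistency check along the way is that, since $\tfrac{1+|\tau_{ij}|}{1-|\tau_{ij}|}\ge1$, the lower bound already implies $\alpha+[\WWK^{-1}\AAK^\dagger\eta]_j>0$, legitimizing the triangle-inequality splitting.

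Uniqueness I would handle exactly as in Theorem \ref{thm:Palpha}: the strict inequalities just obtained place the off-diagonal entries of the constructed subgradient in the \emph{open} interval $(-\alpha,\alpha)$. Given any competitor $\yy\neq\gamn\ee_j$, I may assume it has a nonzero component $y_k$ with $k\neq j$; replacing the $k$-th entry of the subgradient by $\pm1$ yields an admissible $\tilde{\zz}\in\partial h(\WWK\gamn\ee_j)$ for which the subgradient inequality for $h$ becomes strict, and combining this with the convexity of the quadratic $g$ (whose gradient is $\PPK(\xx-\ee_j)-\AAK^\dagger\eta$) gives $\mathcal{J}(\yy)>\mathcal{J}(\gamn\ee_j)$.

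I expect the main obstacle to be the bookkeeping that yields the two-sided bound \eqref{ineq:bounds}. In the noise-free Problem III the off-diagonal subgradient entries landed in $(-\alpha,\alpha)$ automatically, by the maximum property alone; with noise present one must simultaneously (i) keep $\gamn>0$, which gives the upper bound, and (ii) absorb the noise contribution $[\WWK^{-1}\AAK^\dagger\eta]_i$ against the signal while retaining a \emph{strict} inequality, which is what produces the amplification factor $\frac{1+|\tau_{ij}|}{1-|\tau_{ij}|}$ and hence the lower bound. Verifying that the two bounds are compatible, i.e.\ that the interval in \eqref{ineq:bounds} is nonempty once the noise is small enough relative to $[\WWK^{-1}\PPK\ee_j]_j$ and the separation of the $\tau_{ij}$ from $\pm1$, is the step I would treat most carefully.
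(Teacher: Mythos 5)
Your proposal follows the paper's proof step for step: the ansatz $\xx=\gamma\ee_j$, determination of $\gamma$ from the $j$-th component of the subgradient inclusion, the upper bound on $\alpha$ from the requirement $\gamma>0$, reduction of the off-diagonal components to inequalities in $\tau_{ij}$ handled by the triangle inequality together with \eqref{eq:maxprop} applied to $\AAK$, and uniqueness via the perturbed subgradient exactly as in Theorem \ref{thm:Palpha}. So the route is the same; the problem is a sign inconsistency that, as written, breaks your existence step.

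Your optimality inclusion
\[
\WWK^{-1}\PPK(\ee_j-\xx)+\WWK^{-1}\AAK^\dagger\eta\in\alpha\,\partial h(\WWK\xx)
\]
is the correct computation for Problem IV as stated (the data is $\PPK\ee_j+\AAK^\dagger\eta$, noise entering with a plus sign). But its $j$-th component with $\xx=\gamma\ee_j$, $\gamma>0$, reads $(1-\gamma)[\WWK^{-1}\PPK\ee_j]_j+[\WWK^{-1}\AAK^\dagger\eta]_j=\alpha$, whose solution is
\[
\gamma=1-\frac{\alpha-[\WWK^{-1}\AAK^\dagger\eta]_j}{[\WWK^{-1}\PPK\ee_j]_j},
\]
\emph{not} the stated \eqref{eq:gammanoise}; indeed, plugging the stated $\gamn$ into your inclusion yields $\alpha+2[\WWK^{-1}\AAK^\dagger\eta]_j$ in the $j$-th component, so your candidate violates your own optimality condition whenever $[\WWK^{-1}\AAK^\dagger\eta]_j\neq 0$. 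Correspondingly, positivity of the $\gamma$ above gives $\alpha<[\WWK^{-1}\PPK\ee_j]_j+[\WWK^{-1}\AAK^\dagger\eta]_j$, not the stated upper bound. The paper's own proof carries the noise term with the opposite sign — its condition \eqref{eq:condnoise} has $-[\WWK^{-1}\AAK^\dagger\eta]_i$ — and it is that convention which produces \eqref{eq:gammanoise} and \eqref{ineq:bounds} exactly as stated. Relative to Problem IV the two conventions differ only by the relabeling $\eta\mapsto-\eta$, which is immaterial for arbitrary noise, and the lower bound is insensitive to it since it involves only $\left|[\WWK^{-1}\AAK^\dagger\eta]_i\right|$; but a proof must pick one convention and stick to it. To repair your write-up, either work consistently with your plus sign (so $\gamn$ and the upper bound appear with the sign of $[\WWK^{-1}\AAK^\dagger\eta]_j$ flipped), or adopt the paper's sign from the start. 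The remainder of your argument — the sufficiency of the lower bound via the triangle inequality, the strictness needed for uniqueness, and the observation that the two bounds are compatible only when the noise is small enough — is sound and matches the paper.
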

\begin{proof}
  Following the same reasoning as in the proof of Theorem \ref{thm:Palpha}, we derive that $x_\alpha^* = \gamma\ee_j$ is the unique minimizer of  \eqref{eq:PPminnoise} if %and only if
  \begin{equation}\label{eq:condnoise}
        (1-\gamma)[\WWK^{-1}\PPK\ee_j]_i - [\WWK^{-1}\AAK^\dagger\eta]_i \in \alpha
        \begin{cases} 
            \{1\}, & i = j \\
            (-1,1), & i \neq j,
        \end{cases}
  \end{equation}
  where we have used the fact that $\PPK^T=\PPK=\AAK^\dagger \AAK$, and consequently  
  \[
  \PPK^T \AAK^\dagger = \AAK^\dagger \AAK \AAK^\dagger = \AAK^\dagger. 
  \]
  The criterion \eqref{eq:condnoise} holds for $i = j$ if
  \begin{equation*}
      \gamma = \gamn = 1 - \frac{\alpha + [\WWK^{-1}\AAK^\dagger\eta]_j}{[\WWK^{-1}\PPK\ee_j]_j} > 0.
  \end{equation*}
  Consequently, $\alpha$ must satisfy the upper bound
  \begin{equation*}
      \alpha < [\WWK^{-1}\PPK\ee_j]_j - [\WWK^{-1}\AAK^\dagger\eta]_j.
  \end{equation*}
  Furthermore, setting $\gamma = \gamn$ in \eqref{eq:condnoise}, the condition \eqref{eq:condnoise} for $i \neq j$ reads
  \begin{equation}
  \label{eq:BFextraStep}
      \left(\alpha + [\WWK^{-1}\AAK^\dagger\eta]_j\right)\frac{[\WWK^{-1}\PPK\ee_j]_i}{[\WWK^{-1}\PPK\ee_j]_j}
      \in \left(-\alpha + [\WWK^{-1}\AAK^\dagger\eta]_i, \, \alpha + [\WWK^{-1}\AAK^\dagger\eta]_i\right).
  \end{equation}
  Recall that $\tau_{ij} := \frac{[\WWK^{-1}\PPK\ee_j]_i}{[\WWK^{-1}\PPK\ee_j]_j} \in (-1, \,1)$ for $i \neq j$, see\footnote{The property \eqref{eq:maxprop} holds for any matrix $\AAA$ satisfying Assumption \ref{nonpar}, where $\PP$ and $\WW$ are defined in \eqref{eq:Pmatrix} and \eqref{eq:Wmatrix}, respectively. Consequently, \eqref{eq:maxprop} also holds for $\AAK$, replacing $\WW$ with $\WWK$ and $\PP$ with $\PPK$, provided that $\AAK$ also satisfies Assumption \ref{nonpar}.} \eqref{eq:maxprop}, and \eqref{eq:BFextraStep} can thus be expressed as the two inequalities
  \begin{eqnarray}
        \left(1 + \tau_{ij}\right)\alpha &> [\WWK^{-1}\AAK^\dagger\eta]_i - \tau_{ij}[\WWK^{-1}\AAK^\dagger\eta]_j, \label{eq:cond1} \\ 
        \left(1-\tau_{ij}\right)\alpha &> \tau_{ij}[\WWK^{-1}\AAK^\dagger\eta]_j - [\WWK^{-1}\AAK^\dagger\eta]_i. \label{eq:cond1b}
  \end{eqnarray}
 It turns out that both of these inequalities are satisfied, for $i \neq j$, if 
   \begin{equation}
      \max_{i \neq j} \frac{1 + |\tau_{ij}|}{1-|\tau_{ij}|} \max_i \left|[\WWK^{-1}\AAK^\dagger\eta]_i\right| < \alpha. \label{eq:cond1_satisfied}
  \end{equation}
  
 Let us end the proof by verifying that \eqref{eq:cond1} holds if \eqref{eq:cond1_satisfied} is satisfied. Since $(1+\tau_{ij}) > 0$, the requirement \eqref{eq:cond1} can be written in the form 
 \begin{eqnarray*}
  \alpha > \frac{[\WWK^{-1}\AAK^\dagger\eta]_i - \tau_{ij}[\WWK^{-1}\AAK^\dagger\eta]_j}{1+\tau_{ij}}.
 \end{eqnarray*}
 We derive the following inequalities, considering the case $i \neq j$, 
 \begin{eqnarray*}
   \frac{[\WWK^{-1}\AAK^\dagger\eta]_i - \tau_{ij}[\WWK^{-1}\AAK^\dagger\eta]_j}{1+\tau_{ij}} 
   &\leq&   \frac{\left|[\WWK^{-1}\AAK^\dagger\eta]_i - \tau_{ij}[\WWK^{-1}\AAK^\dagger\eta]_j\right|}{\left|1+\tau_{ij}\right|} \\
   &\leq& \frac{\left|[\WWK^{-1}\AAK^\dagger\eta]_i\right| + |\tau_{ij}|\left|[\WWK^{-1}\AAK^\dagger\eta]_j\right|}{1-\left|\tau_{ij}\right|} \\
   &\leq&       \max_{i \neq j} \frac{1 + |\tau_{ij}|}{1-|\tau_{ij}|} \max_i \left|[\WWK^{-1}\AAK^\dagger\eta]_i\right|,
 \end{eqnarray*}
 and we conclude that: If $\alpha$ satisfies \eqref{eq:cond1_satisfied}, then  \eqref{eq:cond1} holds. Similarly, one verifies that \eqref{eq:cond1_satisfied} implies  \eqref{eq:cond1b}.  This finishes the proof.
\end{proof}

Roughly, the left inequality in \eqref{ineq:bounds} ensures that the error amplification caused by the inverse solution procedure does not become too dominate, and the right inequality prevents the regularization term from becoming too “strong” and thereby yielding a poor recovery of the source. 
%This is the mathematical intuition for these inequalities. It is, however, difficult to provide a physical intuition for these constraints on $\alpha$ because these inequalities are linked to the smoothing properties of the forward operator associated with a concrete application.

\textcolor{black}{
We note that, in the zero-noise-limit $\| \eta \|_\infty \rightarrow 0$, the lower and upper bounds in \eqref{ineq:bounds} become $0$ and $[\WWK^{-1}\PPK\ee_j]_j$, respectively. From \eqref{eq:revision1} we find that $[\WW^{-1}\PP\ee_j]_j > 0$ and a similar argument reveals that also $[\WWK^{-1}\PPK\ee_j]_j >0$, cf. definitions \eqref{eq:Pmatrix}, \eqref{eq:Wmatrix}, \eqref{eq:defPK} and \eqref{eq:WKmatrix} of $\PP$, $\WW$, $\PPK$ and $\WWK$, respectively. Hence, provided that the noise level is sufficiently small, one can in principle always choose the size of $\alpha$ such that \eqref{ineq:bounds} holds. On the other hand, as the degree of noise increases, \eqref{ineq:bounds} may not hold for any $\alpha > 0$.     
}

\textcolor{black}{One can also use standard Tikhonov regularization to obtain an approximation of the Moore-Penrose inverse $\AAA^\dagger$ of $\AAA$. We will explore this approach numerically in Subsection \ref{subsec:largeCircularSource}. It is, however, an open problem how to modify the proof of Theorem \ref{thm:Pnoise} to Tikhonov based approximations of $\AAA^\dagger$.}

\begin{remark}[\bf{Several sources}]
 Let us mention that the methods introduced in this paper can not, in general, guarantee the recovery of multiple sources. To show this, assume that the exact data is $\bb^\dagger = \AAA\ee_m + \AAA\ee_n$ and that there exist a constant $c$ and an index $j$ such that $\AAA\ee_m + \AAA\ee_n = c\AAA\ee_j$. 
 
 Recall that $\PP = \AAA^\dagger\AAA$. Hence, multiplying $\AAA\ee_m + \AAA\ee_n = c\AAA\ee_j$ with $\AAA^\dagger$ yields
 \begin{equation*}
     \PP\ee_m + \PP\ee_n = c\PP\ee_j.
 \end{equation*}
 The weighted basis pursuit problem, cf. Problem II, then reads
  \begin{equation}
  \label{problemSS}
     \min_{\xx\in\Rn} \|\WW\xx\|_1 \quad \textnormal{subject to} \quad \PP\xx = \PP\ee_m + \PP\ee_n.
 \end{equation}
 Following the proof of Theorem \ref{thm:exactRecov}, we get, see \eqref{eq:Wmatrix},
 \begin{eqnarray*}
  \|c\WW\ee_j\|_1 &=& |c|\|\PP\ee_j\|_2 \\ 
  &=& |c| \left\|\frac{\PP\ee_m+\PP\ee_n}{c}\right\|_2 \\
  &=& \|\PP\ee_m + \PP\ee_n\|_2 \\
  &<& \|\PP\ee_m\|_2 + \|\PP\ee_n\|_2 \\
  &=& \|\WW(\ee_m + \ee_n)\|_1,
 \end{eqnarray*}
 where the strict inequality is a consequence of Assumption \ref{nonpar}, see also \eqref{Pnonpar}. 
 This argument shows that the two true sources $\ee_m$ and $\ee_n$ will not be recovered by solving \eqref{problemSS}. See Figure \ref{fig:1d} for an illustration.
 
 On the other hand, if there do not exist an index $j$ and a constant $c$ such that $\AAA\ee_m+\AAA\ee_n = c\AAA\ee_j$, it is an open problem whether the weighted basis pursuit formulation can recover a 2-sparse vector. Or, more generally, can we recover a  $s$-sparse solution if its image under $\AAA$ is not equal to the image under $\AAA$ of any $s'$-sparse solution with $s' < s$?
\end{remark}
\begin{figure}[h]
    \centering
    \begin{subfigure}[b]{0.45\linewidth}        %% or \columnwidth
        \centering
        \includegraphics[width=\linewidth]{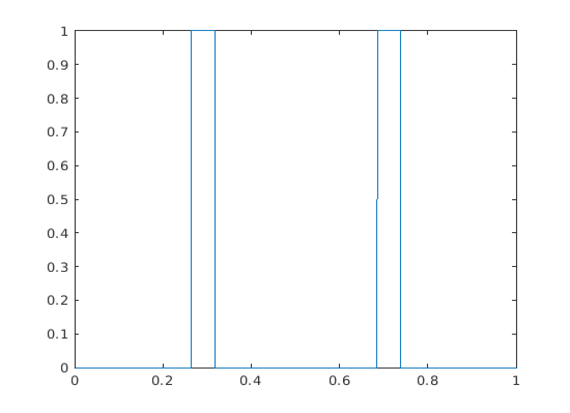}
        \caption{True source.}
        %\label{fig:1A}
    \end{subfigure}
    \begin{subfigure}[b]{0.45\linewidth}        %% or \columnwidth
        \centering
        \includegraphics[width=\linewidth]{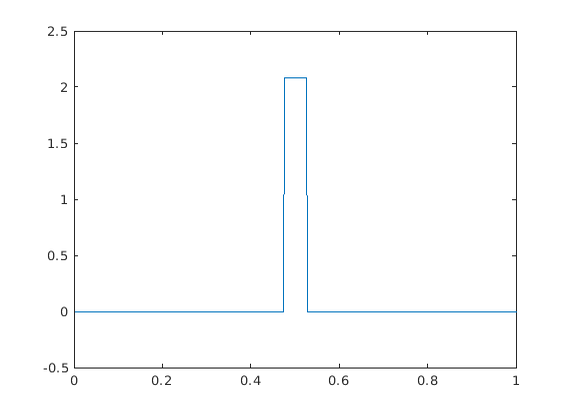}
        \caption{Inverse solution.}
        %\label{fig:1B}
    \end{subfigure}\par
    \caption{Panel (b) shows the solution $f^*$ of \eqref{eq1}-\eqref{eq2} when $\Omega = (0,1), \epsilon = 1$ and $\alpha = 0.001$. The observation data $d$ was in this case generated by the two true local sources depicted in Panel (a).}
    \label{fig:1d}
\end{figure}

\section{Numerical experiments}
\label{sec:numerical_experiments}
\textcolor{black}{
In order to illuminate our theoretical work, we committed the so-called "inverse crime" in Example 1 below: The same grid was used to both generate the boundary observation data $d$ in \eqref{eq1} and for solving the inverse problem. Consequently, the assumptions needed in Theorem \ref{thm:Palpha} are (in principle\footnote{\textcolor{black}{Disregarding round-off errors.}}) satisfied, provided that we consider the single-source-case.}  
%was generated, by solving for the state $u$ in \eqref{eq2}, using the same grid as in the inverse simulations of $u$. 
In all the other experiments, the data $d$ was generated using a finer grid for the state $u$ than was used in the inverse computations. More specifically, $h_\textnormal{forward} = 0.5h_\textnormal{inverse}$, and we performed experiments on the unit square with $129\times129$ and $65\times65$ nodes for the forward and inverse computations of the state $u$, respectively. The source \textcolor{black}{$f \in F_h$} was discretized in terms of a $16\times16$ mesh in all the simulations presented in Examples 1-2 and 4. In Example 3, however, the true source was discretized using a finer $129\times129$ grid. Note that we employed the basis functions 
\begin{equation} \label{eq:revision2}
\pphi_i = \frac{1}{\| \mathcal{X}_{\Omega_i}  \|_{L^2(\Omega)}} \mathcal{X}_{\Omega_i}, 
\quad i=1,2, \ldots, n,  
\end{equation}
for $F_h$, where $\Omega_1, \, \Omega_2, \ldots, \Omega_n$ are uniformly sized disjoint grid cells and $\mathcal{X}_{\Omega_i}$ denotes the characteristic function of $\Omega_i$.

%In Example 1, we chose to commit inverse crime in order to illustrate Theorem \ref{thm:exactRecov}. That is, we wanted to precisely compare the values of $\gam$, given in \eqref{eq:gamma}, with numerical simulations of $\gam$. Therefore, in this example alone, the unit square is discretized with $65\times65$ nodes for both the forward and inverse computations of the state $u$.

We employed the FEniCS software\textcolor{black}{, discretizing the state $u$ in terms of first order Lagrange elements,} to generate the matrices involved in our experiments. Thereafter, the matrices were exported to MATLAB, where the optimization problems were solved with the split-Bregman algorithm \cite{goldstein09}. Some details about the forward/transfer matrix $\AAA$ is presented in \textcolor{black}{Section \ref{sec:prelim}} and Appendix \ref{app:discretization}. We do not present a detailed description of the well-known mappings between the finite element spaces arising from the discretization of \eqref{eq1}-\eqref{eq2} to the Euclidean spaces used in \eqref{eq:BF1}, \eqref{eq:BF2} and \eqref{eq:BF4}. Note, however, that $\ee_j \in \Rn$ is associated with the FE basis function $\phi_j \in F_h \subset L^2(\Omega)$.

In all the simulations $\epsilon = 1$, see \eqref{eq2}, and no noise was added to the data $d$, except in Example 2.

Figure \ref{fig:weights} contains visualizations of the entries of the weight matrix $\WWK$ defined in \eqref{eq:WKmatrix}. More specifically, each panel shows a plot of $\|\PPK\ee_i\|, \, i=1,2,\ldots,n$. We observe that the weights are largest for indexes associated with basis functions positioned close to the boundary of the domain $\Omega$.

\begin{figure}[h]
    \centering
    \begin{subfigure}[b]{0.45\linewidth}        %% or \columnwidth
        \centering
        \includegraphics[width=\linewidth]{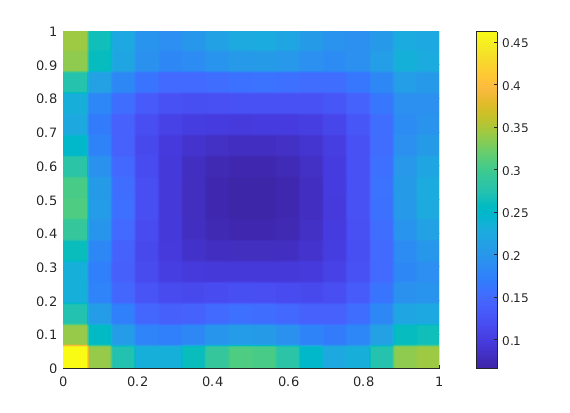}
        \caption{$k = 7$.}
        %\label{fig:1A}
    \end{subfigure}
    \begin{subfigure}[b]{0.45\linewidth}        %% or \columnwidth
        \centering
        \includegraphics[width=\linewidth]{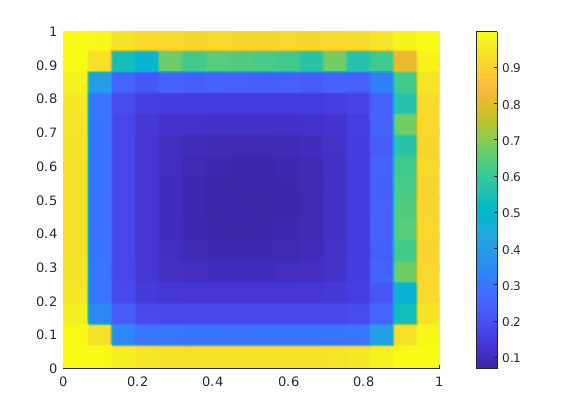}
        \caption{$k = 70$.}
        %\label{fig:1B}
    \end{subfigure}\par
    \caption{Visualizations of the matrix $\WWK$. Panels (a) and (b) show plots of $\|\PPK\ee_i\|, \, i=1,2,\ldots,n$, see \eqref{eq:WKmatrix}, for two different choices of the truncation parameter $k$.}
    \label{fig:weights}
\end{figure}

\subsection{Example 1: Exact recovery of a single source}
Figures \ref{fig:ex1_1} and \ref{fig:ex1_2} show numerical solutions of \eqref{eq:BF2}. In these problems we recover a single source and $\bb = \AAA \ee_j$. The theory developed for Problem III is therefore applicable, see Theorem \ref{thm:Palpha}. 
Figure \ref{fig:ex1_comp}  contains a comparison of the size of $\gam$, cf. \eqref{eq:gamma}, and the maximum value, $\max_i [\xx_\alpha^*]_i$, of the solution $\xx_\alpha^*$ of Problem III. 
 We observe that the outcome of these experiments is as one could have anticipated from Theorem \ref{thm:Palpha}.

\begin{figure}[h]
    \centering
    \begin{subfigure}[b]{0.45\linewidth}        %% or \columnwidth
        \centering
        \includegraphics[width=\linewidth]{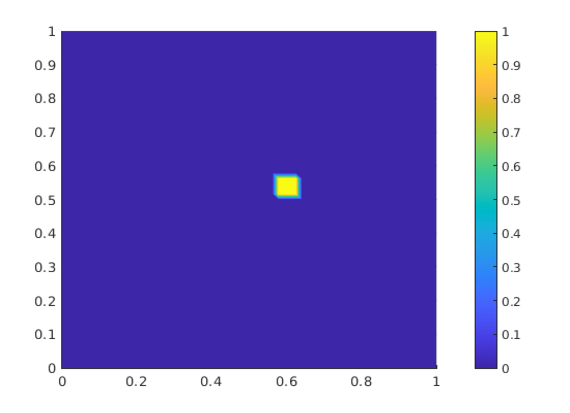}
        \caption{True source.}
        %\label{fig:1A}
    \end{subfigure}
    \begin{subfigure}[b]{0.45\linewidth}        %% or \columnwidth
        \centering
        \includegraphics[width=\linewidth]{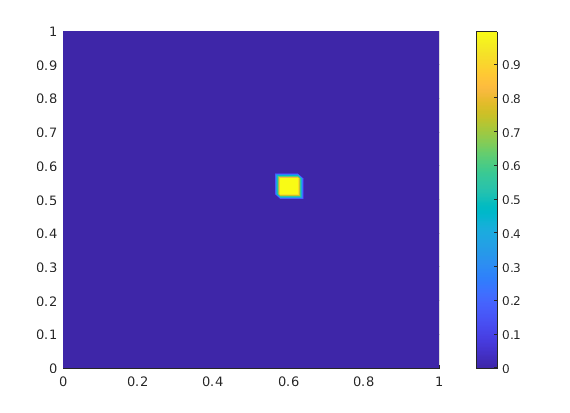}
        \caption{Inverse solution.}
        %\label{fig:1B}
    \end{subfigure}\par
    \caption{Comparison of a true \textit{interior} source and the inverse solution computed by solving \eqref{eq:BF2}, Example 1. The size of the regularization parameter was $\alpha = 10^{-4}$.}
    \label{fig:ex1_1}
\end{figure}

\begin{figure}[h]
    \centering
    \begin{subfigure}[b]{0.45\linewidth}        %% or \columnwidth
        \centering
        \includegraphics[width=\linewidth]{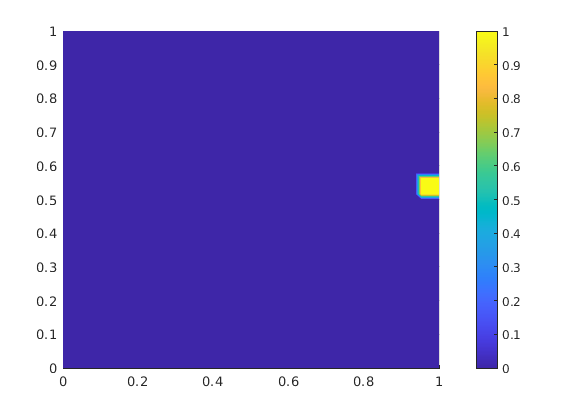}
        \caption{True source.}
        %\label{fig:1A}
    \end{subfigure}
    \begin{subfigure}[b]{0.45\linewidth}        %% or \columnwidth
        \centering
        \includegraphics[width=\linewidth]{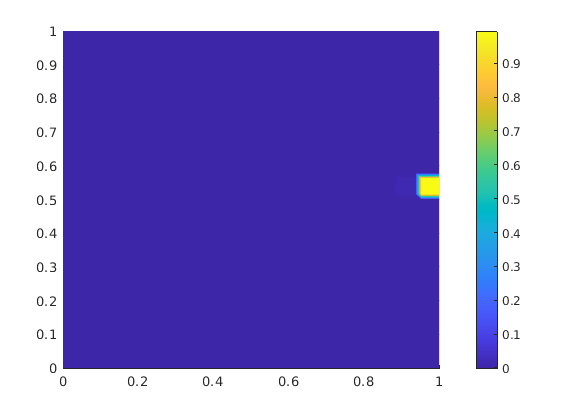}
        \caption{Inverse solution.}
        %\label{fig:1B}
    \end{subfigure}\par
    \caption{Comparison of a true source located at the \textit{boundary} and the inverse solution computed by solving \eqref{eq:BF2}, Example 1. The size of the regularization parameter was $\alpha = 10^{-3}.$}
    \label{fig:ex1_2}
\end{figure}

\begin{figure}[h]
    \centering
    \begin{subfigure}[b]{0.45\linewidth}        %% or \columnwidth
        \centering
        \includegraphics[width=\linewidth]{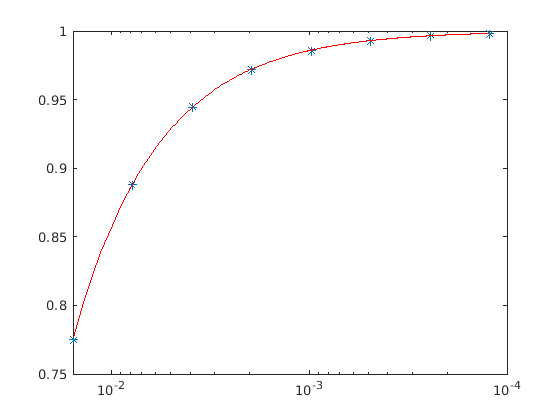}
        \caption{Interior source.}
        %\label{fig:1A}
    \end{subfigure}
    \begin{subfigure}[b]{0.45\linewidth}        %% or \columnwidth
        \centering
        \includegraphics[width=\linewidth]{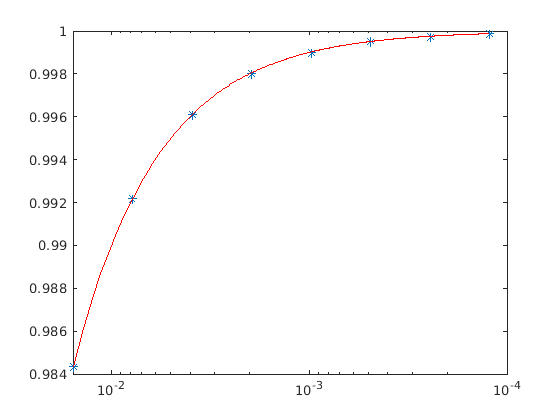}
        \caption{Source at the boundary.}
        %\label{fig:1B}
    \end{subfigure}\par
    \caption{Example 1. The red curve shows the size of $\gam$, see \eqref{eq:gamma}, as a function of the regularization parameter $\alpha$, and the asterisks represent $\max_i [\xx_\alpha^*]_i$, where $\xx_\alpha^*$ is the solution of \eqref{eq:BF2}.} 
    \label{fig:ex1_comp}
\end{figure}

\subsection{Example 2: Noise}
If the observation data contains noise, it is natural to solve \eqref{eq:BF4}. Throughout this example, $k = 7$ in the truncated SVD employed to obtain the approximation $\AAK$ of $\AAA$, see Subsection \ref{subsec:altOpt}. The experiment was executed as follows:

\begin{enumerate}
    \item Generate the data
       \begin{equation*}
           \bb = \bb^\dagger + \eta = \AAA\ee_j + \eta, \quad \eta = \delta\rho,
       \end{equation*}
       where $\delta$ is a scalar and $\rho$ is a vector containing normally distributed numbers with zero mean and standard deviation equal to $1$. See \cite[Example 6]{Elv20} for a thorough discussion of the noise.
    \item 
        Compute $\xx_k^* = \AAK^\dagger\bb = \AAK^\dagger(\AAA\ee_j + \eta)$, see \eqref{eq:BF4}.
    \item 
        Set
          \begin{equation*}
              \bar{\alpha} = \max_{i \neq j} \frac{1 + |\tau_{ij}|}{1-|\tau_{ij}|} \max_i \left|[\WWK^{-1}\AAK^\dagger\eta]_i\right|,
          \end{equation*}
        cf. Theorem \ref{thm:Pnoise}.
    \item Compute, see \eqref{eq:BF4}, 
        \begin{equation*}
            \xx_\alpha^* = \argmin_{\xx\in\Rn} \left\{ \frac{1}{2}\|\PPK\xx - \xx_k^*\|_2^2 + \alpha\|\WWK\xx\|_1 \right\}
        \end{equation*}
    for both $\alpha = 0.3\bar{\alpha}$ and $\alpha = 3\bar{\alpha}$.
\end{enumerate}
This "setup" is such that the theory, presented in Theorem \ref{thm:Pnoise}, for Problem IV is applicable. 

Note that the problem is regularized with both standard truncated SVD (i.e., the choice of the truncation parameter $k$), and $\ell^1$-regularization (i.e., the choice of $\alpha$). How to optimally choose these parameters in relation to each other is a complicated matter and left for future research.

\begin{figure}[H]
    \centering
    \includegraphics[width=0.45\linewidth]{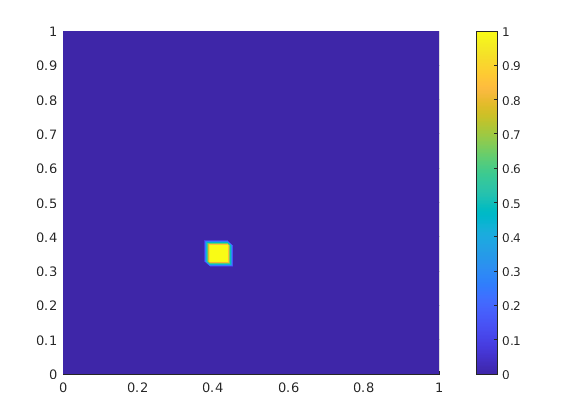}
    \caption{Example 2, true source.}\label{fig:ex2true}
\end{figure}

Figure \ref{fig:ex2a} compares over-regularized ($\alpha = 3\bar{\alpha}$) and under-regularized ($\alpha = 0.3\bar{\alpha}$) solutions of \eqref{eq:BF4} with observation data containing 5\% noise. Similar comparisons are presented in Figures \ref{fig:ex2b} and \ref{fig:ex2c} for 10\% and 15\% noise, respectively. The true source is displayed in Figure \ref{fig:ex2true}.

When $\alpha = 3\bar{\alpha}$, the plots of $\xx_\alpha^*$, displayed in panels \ref{fig:ex2a}a), \ref{fig:ex2b}a) and \ref{fig:ex2c}a), show that the true source is successfully recovered in all three cases, albeit with an underestimated magnitude. 

We can, however, as a post-processing step, improve the magnitude of the solutions using \eqref{eq:gammanoise}: Assuming $[\WWK^{-1}\AAK^\dagger\eta]_j << [\WW^{-1}\PPK\ee_j]_j$, we compute
\begin{equation*}
    \xx_{\alpha, \textnormal{SCALED}}^* = \frac{\xx_\alpha^*}{1-\frac{\alpha}{[\WWK^{-1}\PPK\ee_j]_j}},
\end{equation*}
where the index $j$ in the denominator is known since $\argmax_i \xx_\alpha^* = j$, see Theorem \ref{thm:Pnoise}. By re-scaling the solution displayed in panel (a) of Figures \ref{fig:ex2a}-\ref{fig:ex2c}, the magnitude of the rescaled source becomes 0.88, 1.0 and 0.93, respectively. This post-processing step can only be mathematically justified if the assumptions needed in Theorem \ref{thm:Pnoise} hold, but it can "always" be applied in practice. (We have not explored its success when the assumptions in Theorem \ref{thm:Pnoise} are violated.) 

When the problem is under-regularized, i.e., $\alpha = 0.3\bar{\alpha}$, the inverse solutions are still good visual approximations of the true source, see Figures \ref{fig:ex2a}b), \ref{fig:ex2b}b) and \ref{fig:ex2c}b). However, the inverse solutions also contain small contributions from other basis vectors than $\ee_j$. Both the magnitude and the number of incorrect active basis vectors appear to increase as the noise level increases.

\begin{figure}[H]
    \centering
    \begin{subfigure}[b]{0.45\linewidth}        %% or \columnwidth
        \centering
        \includegraphics[width=\linewidth]{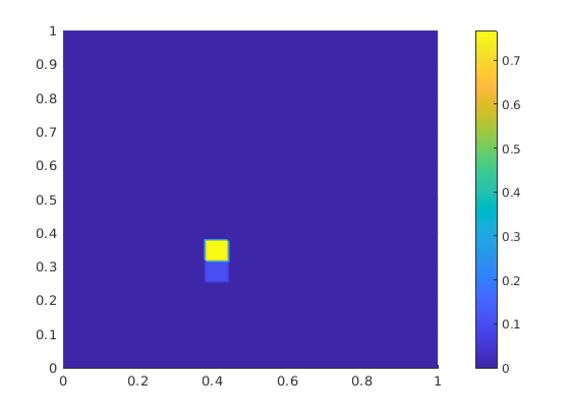}
        \caption{Inverse solution, $\alpha = 3\bar{\alpha}$.}
        %\label{fig:1A}
    \end{subfigure}
        \begin{subfigure}[b]{0.45\linewidth}        %% or \columnwidth
        \centering
        \includegraphics[width=\linewidth]{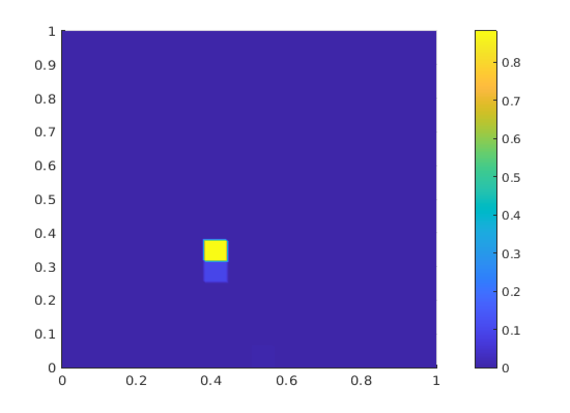}
        \caption{Inverse solution, $\alpha = 0.3\bar{\alpha}$.}
        %\label{fig:1A}
    \end{subfigure}\par
    \caption{Example 2, 5\% noise, $\bar{\alpha} = 0.0031$.}
    \label{fig:ex2a}
\end{figure}

\begin{figure}[H]
    \centering
    \begin{subfigure}[b]{0.45\linewidth}        %% or \columnwidth
        \centering
        \includegraphics[width=\linewidth]{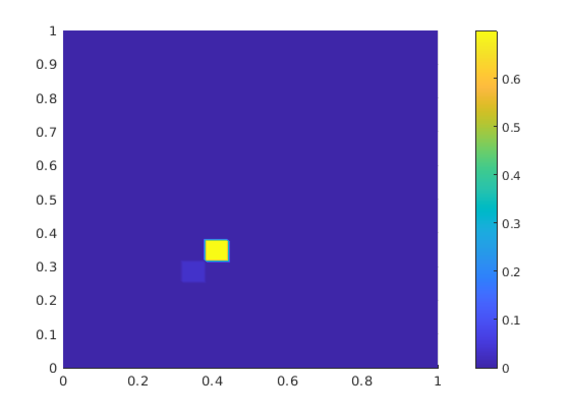}
        \caption{Inverse solution, $\alpha = 3\bar{\alpha}$.}
        %\label{fig:1A}
    \end{subfigure}
        \begin{subfigure}[b]{0.45\linewidth}        %% or \columnwidth
        \centering
        \includegraphics[width=\linewidth]{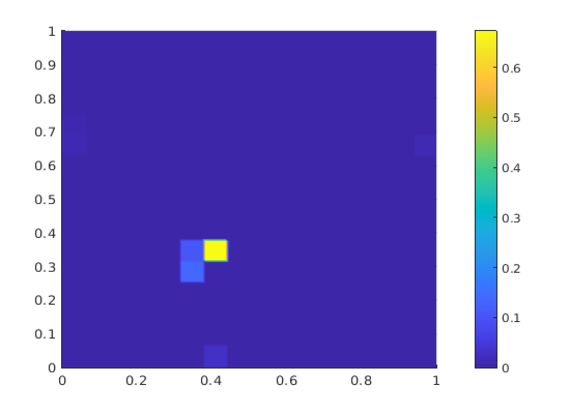}
        \caption{Inverse solution, $\alpha = 0.3\bar{\alpha}$.}
        %\label{fig:1A}
    \end{subfigure}\par
    \caption{Example 2, 10\% noise, $\bar{\alpha} = 0.0067$.}
    \label{fig:ex2b}
\end{figure}

\begin{figure}[H]
    \centering
    \begin{subfigure}[b]{0.45\linewidth}        %% or \columnwidth
        \centering
        \includegraphics[width=\linewidth]{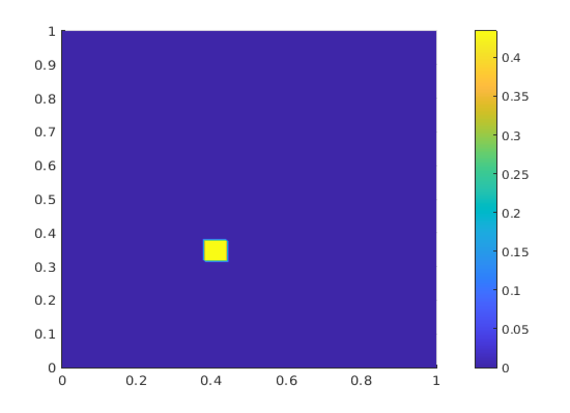}
        \caption{Inverse solution, $\alpha = 3\bar{\alpha}$.}
        %\label{fig:1A}
    \end{subfigure}
        \begin{subfigure}[b]{0.45\linewidth}        %% or \columnwidth
        \centering
        \includegraphics[width=\linewidth]{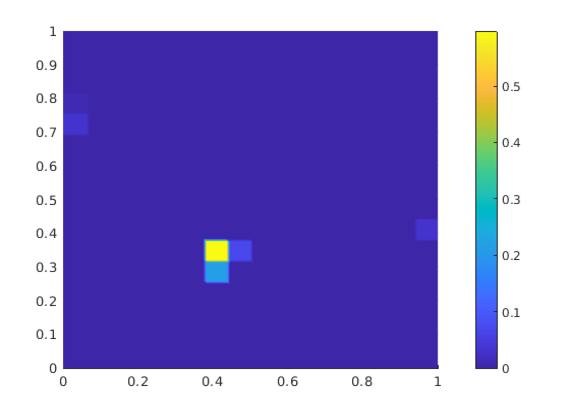}
        \caption{Inverse solution, $\alpha = 0.3\bar{\alpha}$.}
        %\label{fig:1A}
    \end{subfigure}\par
    \caption{Example 2, 15\% noise, $\bar{\alpha} = 0.0122$.}
    \label{fig:ex2c}
\end{figure}

In most applications, $\bar{\alpha}$ is not available because it requires full knowledge of the noise $\eta$. We therefore performed a numerical study under the common assumption that only (an estimate of) $\|\eta\|_2$ is known. \textcolor{black}{This allows the use of Morozov's discrepancy principle \cite{morozov1966solution,BEng96} for choosing the truncation parameter $k$ for each fixed size of $\alpha$: We did not attempt to estimate appropriate values for both $k$ and $\alpha$ simultaneously, which must thus be regarded as an open problem. Figure \ref{fig:ex2many} compares solutions of \eqref{eq:BF4} for different choices of $k$ and $\alpha$, where the choice $k = 5$ is the outcome of applying Morozov's discrepancy principle with the threshold $1.05\|\eta\|_2$.} The values $k = 3$ and $k = 15$ are chosen simply to compare the choice $k = 5$ with a smaller and larger truncation parameter.

In this particular example, we observe that using a relatively strong regularization in the truncated SVD step, i.e., choosing $k = 3$, gives good reconstruction of the source for all the tested values of $\alpha$. If the regularization by truncated SVD is reduced, i.e., when $k$ increases, it appears that $\alpha$ must be chosen more carefully to obtain a good reconstruction. 

\begin{figure}[H]
    \centering
    \begin{subfigure}[b]{0.32\linewidth}        %% or \columnwidth
        \centering
        \includegraphics[width=\linewidth]{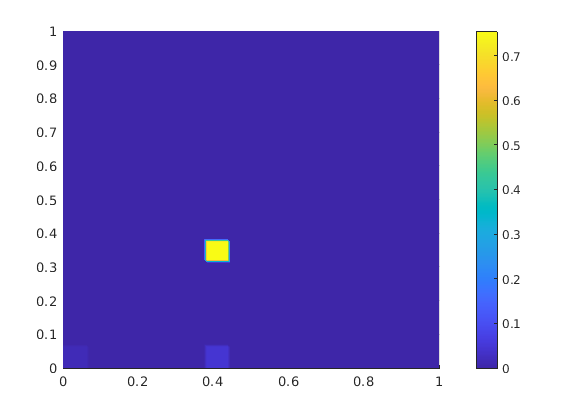}
        \caption{$k=3$, $\alpha = 10^{-2}$.}
        %\label{fig:1A}
    \end{subfigure}
    \begin{subfigure}[b]{0.32\linewidth}        %% or \columnwidth
        \centering
        \includegraphics[width=\linewidth]{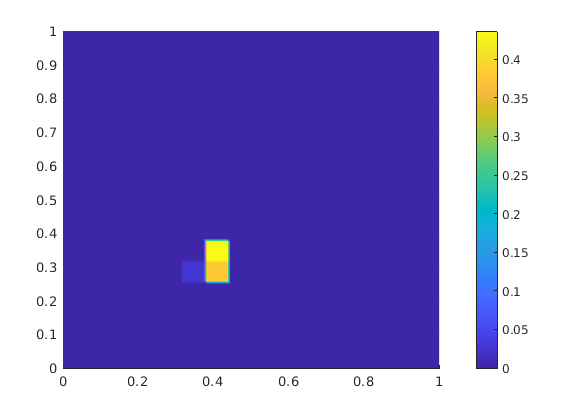}
        \caption{$k=5$, $\alpha = 10^{-2}$.}
        %\label{fig:1A}
    \end{subfigure}    
    \begin{subfigure}[b]{0.32\linewidth}        %% or \columnwidth
        \centering
        \includegraphics[width=\linewidth]{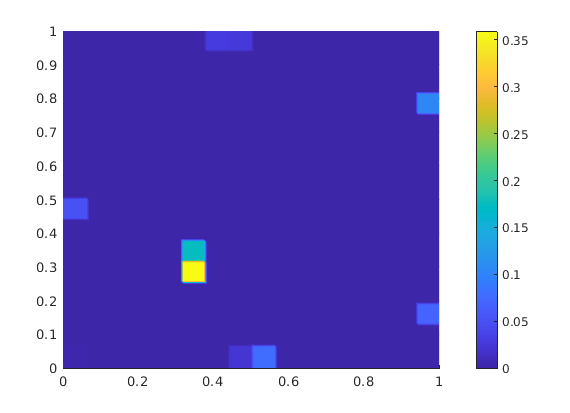}
        \caption{$k=15$, $\alpha = 10^{-2}$.}
        %\label{fig:1B}
    \end{subfigure}\par
    \begin{subfigure}[b]{0.32\linewidth}        %% or \columnwidth
        \centering
        \includegraphics[width=\linewidth]{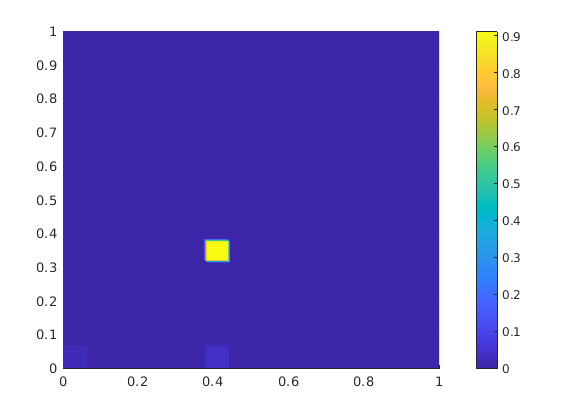}
        \caption{$k=3$, $\alpha = 10^{-3}$.}
        %\label{fig:1A}
    \end{subfigure}
    \begin{subfigure}[b]{0.32\linewidth}        %% or \columnwidth
        \centering
        \includegraphics[width=\linewidth]{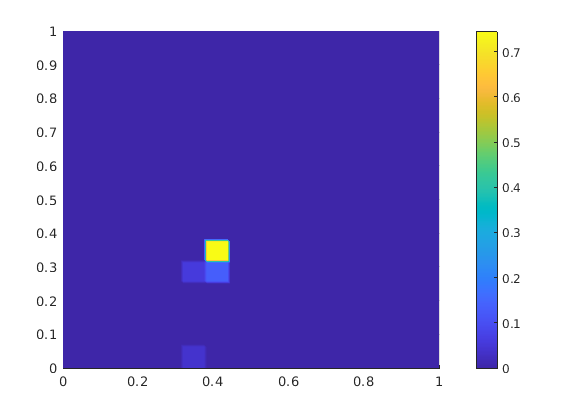}
        \caption{$k=5$, $\alpha = 10^{-3}$.}
        %\label{fig:1A}
    \end{subfigure}    
    \begin{subfigure}[b]{0.32\linewidth}        %% or \columnwidth
        \centering
        \includegraphics[width=\linewidth]{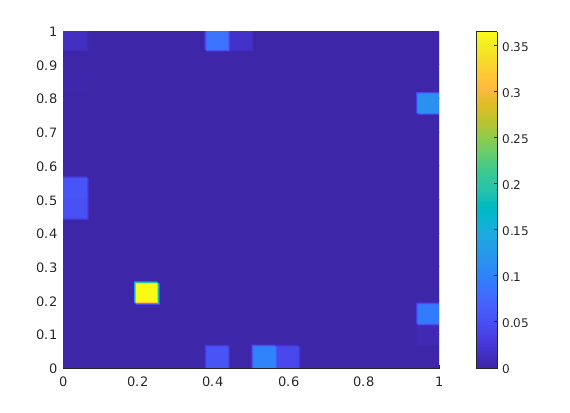}
        \caption{$k=15$, $\alpha = 10^{-3}$.}
        %\label{fig:1B}
    \end{subfigure}\par
    \begin{subfigure}[b]{0.32\linewidth}        %% or \columnwidth
        \centering
        \includegraphics[width=\linewidth]{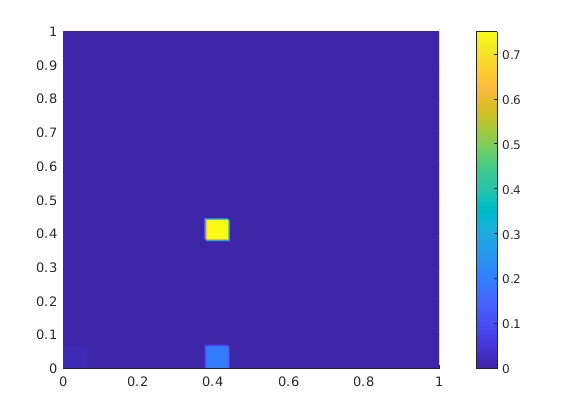}
        \caption{$k=3$, $\alpha = 10^{-4}$.}
        %\label{fig:1A}
    \end{subfigure}
    \begin{subfigure}[b]{0.32\linewidth}        %% or \columnwidth
        \centering
        \includegraphics[width=\linewidth]{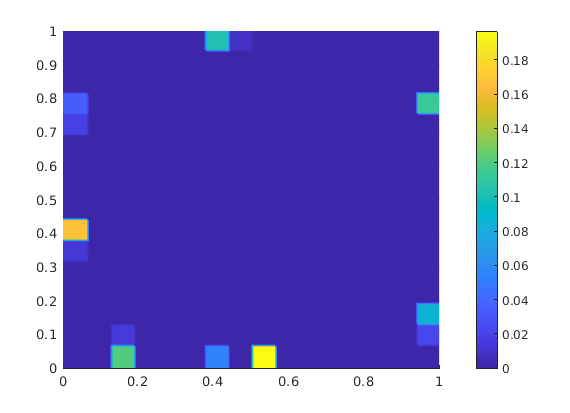}
        \caption{$k=5$, $\alpha = 10^{-4}$.}
        %\label{fig:1A}
    \end{subfigure}    
    \begin{subfigure}[b]{0.32\linewidth}        %% or \columnwidth
        \centering
        \includegraphics[width=\linewidth]{images/ex2_many/k15a10000.png}
        \caption{$k=15$, $\alpha = 10^{-4}$.}
        %\label{fig:1B}
    \end{subfigure}\par    
    \caption{Example 2, 10\% noise. Comparison of inverse solutions computed with different choices of the regularization parameters $k$ and $\alpha$. \textcolor{black}{The choice $k=5$ was the outcome of using Morozov's discrepancy principle, for each given size of $\alpha$, with the threshold $1.05\|\eta\|_2$. For this test problem, the discrepancy principle lead to the same value $k=5$ for $\alpha=10^{-2}, \, 10^{-3}, \, 10^{-4}$.} Figure \ref{fig:ex2true} shows the true source.}
    \label{fig:ex2many}
\end{figure}

\subsection{Example 3: Large circular source}
\label{subsec:largeCircularSource}
So far we have considered examples covered by our analysis. We will now depart from this and explore more involved cases: The true source depicted in Figure \ref{fig:ex3true}(a) does not belong to the finite element space associated with the coarse mesh used to represent the source in the inverse computations. 

We first note that Figure \ref{fig:ex3true}(b) shows that classical Tikhonov regularization 
\begin{equation}\label{eq3tikh}
\min_{\xx\in\Rn} \left\{ \frac{1}{2}\|\AAA\xx - \bb\|_2^2 + \zeta \|\xx\|_2^2 \right\}, 
\end{equation}
with $\zeta=10^{-4}$, fails to yield an adequate solution to this problem: Compare panels (a) and (b) of Figure \ref{fig:ex3true}. The mathematical explanation for this is presented in \cite{Elv20}. 

Next, we observe in Figure \ref{fig:ex3}, left column, that the sparsity structure of the inverse solution computed by solving \eqref{eq:BF4} deteriorates as the $\ell^1$-regularization parameter $\alpha$ tends to zero. However, for all three values of $\alpha$, the position of the source is quite well recovered. In these simulations, the truncated SVD employed to obtain the approximation $\AAA_k^\dagger$ of the pseudo inverse $\AAA^\dagger$ in \eqref{eq:BF4}, was obtained by choosing the truncation parameter $k=5$. 

\textcolor{black}{As an alternative to the truncated SVD, we also used standard Tikhonov regularization to obtain an approximation of $\AAA^\dagger$:} Employing the singular value decomposition $\AAA = \mathsf{U}\Sigma\mathsf{V}^T$ of $\AAA$, the solution of the minimization problem
\begin{equation*}
    \min_{\hxx\in\Rn} \left\{ \frac{1}{2}\|\AAA\hxx-\bb\|_2^2 + \frac{1}{2}\halpha\|\hxx\|_2^2 \right\},
\end{equation*}
can be expressed as
\begin{equation*}
  \hxx_\halpha = \mathsf{V}(\Sigma^2 + \halpha\mathsf{I})^{-1}\mathsf{V}^T \AAA^T \bb 
  = \SSa\bb,
\end{equation*}
where 
\begin{equation}\label{eq:Sa}
\SSa = \mathsf{V}(\Sigma^2 + \halpha\mathsf{I})^{-1}\mathsf{V}^T \AAA^T \approx \AAA^\dagger. 
\end{equation}
Replacing $\AAA^\dagger$ in \eqref{eq:BF2} with $\SSa$, keeping in mind that $\PP = \AAA^\dagger\AAA$, leads to the following alternative to \eqref{eq:BF4}
      \begin{equation} \label{eq:OLE5}
        \min_{\xx\in\Rn} \left\{ \frac{1}{2}\|\SSa\AAA\xx - \SSa \bb\|_2^2 + \alpha\|\WW\xx\|_1 \right\}.
      \end{equation}
\textcolor{black}{We also mention that, so far, we have not been able to modify the proof of Theorem \ref{thm:Pnoise} to Tihonov based approximations of $\AAA^\dagger$, i.e., to \eqref{eq:OLE5} with $\bb=\AAA \ee_j + \eta$.}
% Assuming that $\bb = \AAA\ee_j + \eta$ leads to the formulation of
% \begin{itemize}
%     \item \textbf{Problem V:} 
%       \begin{equation} \label{eq:Saminnoise}
%         \min_{\xx\in\Rn} \frac{1}{2}\|\SSa\AAA \xx - (\SSa\AAA \ee_j + \SSa \eta )\|_2^2 + \alpha\|\WW \xx\|_1.
%       \end{equation}
% \end{itemize}
\begin{comment}
\begin{figure}[H]
    \centering
    \begin{subfigure}[b]{0.5\linewidth}        %% or \columnwidth
        \centering
        \includegraphics[width=\linewidth]{}
        \caption{Re-scaling of the solution depicted in Figure \ref{fig:ex2a}a).}
        %\label{fig:1A}
    \end{subfigure}\par
    \begin{subfigure}[b]{0.5\linewidth}        %% or \columnwidth
        \centering
        \includegraphics[width=\linewidth]{}
        \caption{Re-scaling of the solution depicted in Figure \ref{fig:ex2b}a).}
        %\label{fig:1B}
    \end{subfigure}\par
    \begin{subfigure}[b]{0.5\linewidth}        %% or \columnwidth
        \centering
        \includegraphics[width=\linewidth]{}
        \caption{Re-scaling of the solution depicted in Figure \ref{fig:ex2c}a).}
        %\label{fig:1B}
    \end{subfigure}
    \caption{Example 2, re-scaled versions $\xx_{\alpha,\textnormal{SCALED}}^*$ of the inverse solutions $\xx_\alpha^*$ for $\alpha = 3\bar{\alpha}$.}
    \label{fig:ex2Scaled}
\end{figure}
\end{comment}
%\begin{figure}[H]
%    \centering
%    \includegraphics[width=0.35\linewidth]{}
%    \caption{Example 3: True source.}\label{fig:ex3true}
%\end{figure}

\begin{figure}[H]
    \centering
    \begin{subfigure}[b]{0.45\linewidth}        %% or \columnwidth
        \centering
        \includegraphics[width=\linewidth]{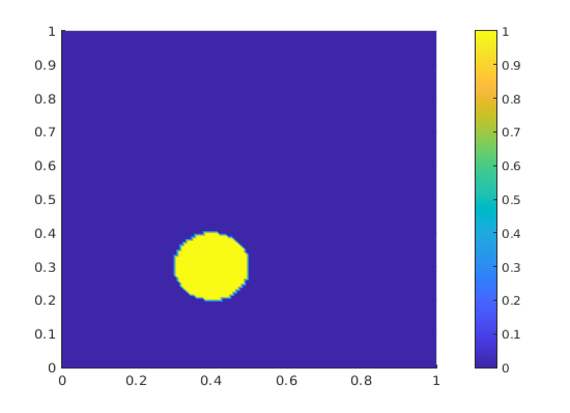}
        \caption{True source.}
        %\label{fig:1A}
    \end{subfigure}
        \begin{subfigure}[b]{0.45\linewidth}        %% or \columnwidth
        \centering
        \includegraphics[width=\linewidth]{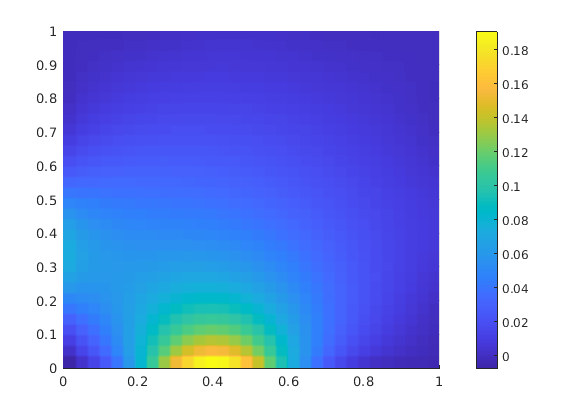}
        \caption{Solution of \eqref{eq3tikh} with $\zeta = 10^{-4}$.}
        %\label{fig:1A}
    \end{subfigure}\par
    \caption{Example 3. Panel (a) shows the true source, and panel (b) displays the numerical solution of \eqref{eq1}-\eqref{eq2} using Tikhonov regularization $\zeta \| f \|_{L^2(\Omega)}^2$ instead of the weighted $\ell^1$-regularization $\alpha \sum_i w_i |(f, \phi_i)_{L^2(\Omega)}|$.}
    \label{fig:ex3true}
\end{figure}

The right column in Figure \ref{fig:ex3} contains the results obtained by solving \eqref{eq:OLE5}. 
We observe, particularly for the two largest values of the $\ell^1$-regularization parameter $\alpha$, that truncated SVD and Tikhonov regularization yield visually rather similar results. 

% Let $\ee_j$ be associated with a FE basis function $\phi_j$, which has support on a square fully enclosed in the large circle $\mathbf{v}_\textnormal{CIRCLE}$, depicted in Figure \ref{fig:ex3true}. The noise $\eta$ can in this case be interpreted as the difference between $\tilde{\AAA}\mathbf{v}_\textnormal{CIRCLE}$ and $\AAA\ee_j$, where $\tilde{A}$ is the matrix associated with the finer mesh on which $\mathbf{v}_\textnormal{CIRCLE}$ is defined.

\subsection{Example 4: Multiple sources}
Our last examples concern several sources. To solve the problems, we did the following:
\begin{enumerate}
    \item For $N$ true sources, we computed
       \begin{equation*}
           \bb^\dagger = \AAA\left(\sum_{i=1}^N \ee_{q_i}\right).
       \end{equation*}
    \item Thereafter, we solved the problem 
        \begin{equation*}
            \min_{\xx\in\Rn} \left\{ \frac{1}{2}\|\SSa\AAA\xx - \SSa\bb^\dagger\|_2^2 + \alpha\|\WW\xx\|_1 \right\},
        \end{equation*}
        where $\SSa$ is defined in \eqref{eq:Sa}.
\end{enumerate}
%That is, the first step is simply to the generate the data. The second step is the standard least-squares solution with Tikhonov regularization. This yields a smooth function $\hat{\xx}_\halpha$, which is a sum of functions all attaining their maximum at the boundary, cf. \cite{Elv20}. Then, the third and final step can in some sense be interpreted as a \textit{re-positioning and deblurring} step with weighted $\ell^1$-regularization.

Figure \ref{fig:ex4} contains results obtained with 2, 4 and 8 true sources. Panel b) shows that the two sources are nearly perfectly recovered. For the case with 4 sources, the inverse solution recovers three of the sources almost perfectly, while the fourth is 'split' into two adjacent sources with less magnitude, cf. Panel d). Panel f) shows the inverse solution for the case with 8 true sources, where we observe that the three sources located at the boundary are recovered very well (note the color bar), whereas the interior sources are merged somewhat into two clusters in the inverse solution.

\section{Conclusions}
If the exact data is generated from a single basis vector $\ee_j$, our weighted $\ell^1$-regularization technique is able to exactly recover the true solution. When noise is present, we have obtained estimates for the size of the regularization parameter $\alpha$ which yield an inverse solution in the form $\gamn\ee_j$, where $\gamn$ is a positive scalar. Numerical experiments suggest that our method also can, in many cases, identify several local sources, but we do not have a thorough mathematical understanding of this. We only know with certainty that it is possible to construct scenarios for which our scheme will fail to recover two sources. 

\textcolor{black}{The computation of our weight matrix involves the pseudo inverse, which in practise must be approximated by a more "well-behaved" operator. In the analysis we accomplished this by employing a truncated SVD approach, and we observed numerically that also Tikhonov based approximations of the Moore-Penrose inverse work well. For the latter, however, it remains to develop a rigorous mathematical analysis.} 

Concerning the practical use of our weighted $\ell^1$-regularization method, it seems reasonable to expect that the method can recover two or three well-separated sources. Nevertheless, the definition of "well-separated" is problem dependent since it must depend on the smoothing properties of the involved forward operator and the geometry of the solution domain $\Omega$. This means that one should, for each concrete application, perform a simulation study to explore which source patterns that can be identified by the weighted $\ell^1$-regularization method.  

We defined our regularization operator and presented our analysis in terms of Euclidean spaces. Consequently, the methodology can be applied whenever a discrete version of a source identification task can be formulated in terms of a transfer matrix with a significant null space. For example, the use is not restricted to PDE-constrained optimization problems with elliptic state equations, but can also be applied when the state equation is parabolic or hyperbolic. 

This study was motivated by inverse problems arising in connection with EEG and ECG recordings. In principle, our scheme can be applied to these problems, but a number of challenging engineering issues must be handled: one must construct suitable geometrical models (using, e.g., MR images), obtain EEC or ECG recordings, handle noisy data, \textcolor{black}{construct suitable basis functions for the source term which enables the incorporation of dipoles,} etc. We intend to explore the EEG and ECG applications in forthcoming investigations. 

\textcolor{black}{In this paper we search for a source in a finite dimensional space. From a pure mathematical perspective, an interesting problem would be to develop an analogous theory using an infinite dimensional source space. The authors believe that this {\em might} be possible following the approach presented in \cite{casas2012approximation}.}

\begin{figure}[H]
    \centering
    \begin{subfigure}[b]{0.45\linewidth}        %% or \columnwidth
        \centering
        \includegraphics[width=\linewidth]{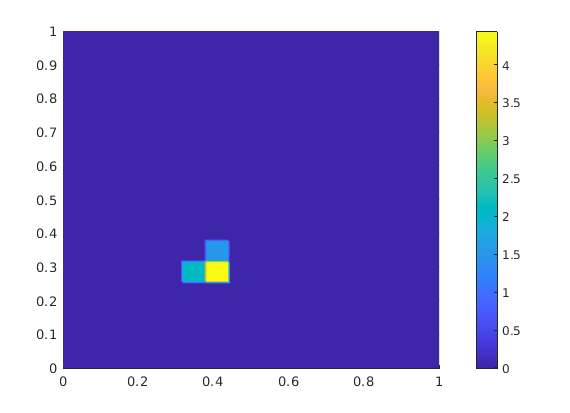}
        \caption{Truncated SVD. $\alpha = 10^{-2}$.}
        %\label{fig:1A}
    \end{subfigure}
    \begin{subfigure}[b]{0.45\linewidth}        %% or \columnwidth
        \centering
        \includegraphics[width=\linewidth]{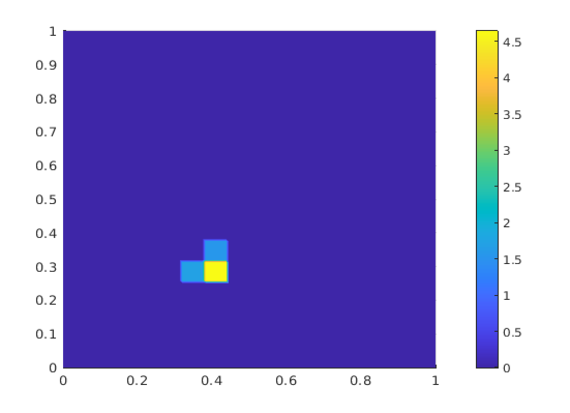}
        \caption{Tikhonov. $\alpha = 10^{-2}$.}
        %\label{fig:1B}
    \end{subfigure}\par
    \begin{subfigure}[b]{0.45\linewidth}        %% or \columnwidth
        \centering
        \includegraphics[width=\linewidth]{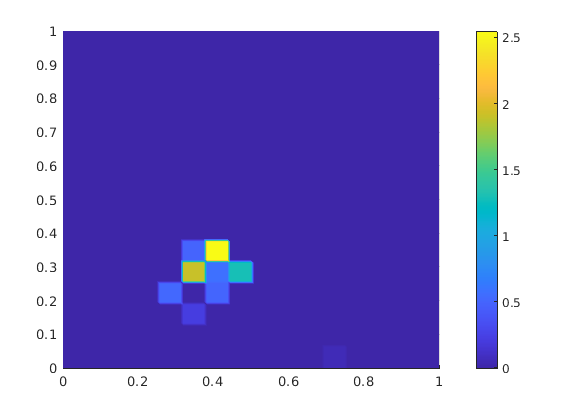}
        \caption{Truncated SVD. $\alpha = 10^{-4}$.}
        %\label{fig:1A}
    \end{subfigure}
    \begin{subfigure}[b]{0.45\linewidth}        %% or \columnwidth
        \centering
        \includegraphics[width=\linewidth]{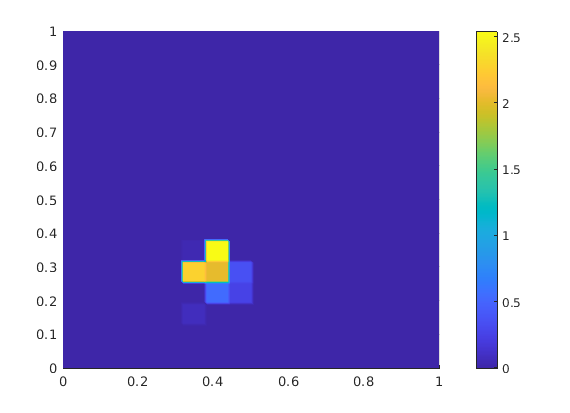}
        \caption{Tikhonov. $\alpha = 10^{-4}$.}
        %\label{fig:1B}
    \end{subfigure}\par
    \begin{subfigure}[b]{0.45\linewidth}        %% or \columnwidth
        \centering
        \includegraphics[width=\linewidth]{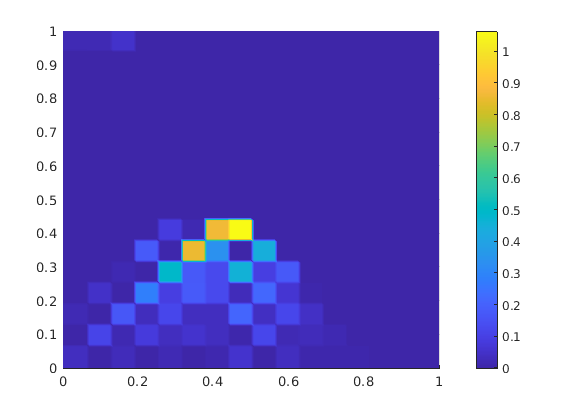}
        \caption{Truncated SVD. $\alpha = 10^{-6}$.}
        %\label{fig:1A}
    \end{subfigure}
    \begin{subfigure}[b]{0.45\linewidth}        %% or \columnwidth
        \centering
        \includegraphics[width=\linewidth]{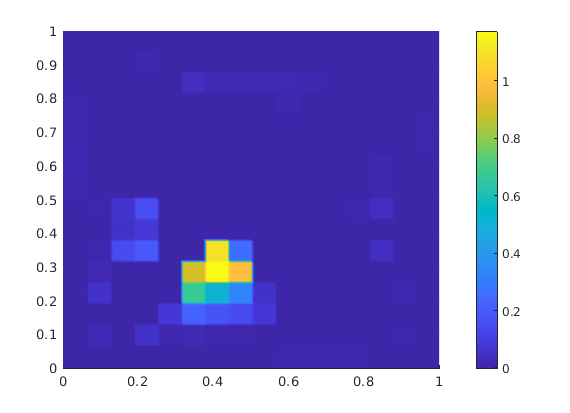}
        \caption{Tikhonov. $\alpha = 10^{-6}$.}
        %\label{fig:1B}
    \end{subfigure}\par    
    \caption{Example 3, employing weighted $\ell^1$-regularization $\alpha \| \WW \xx \|_1$. The inverse solutions were computed by solving \eqref{eq:BF4} (approximating $\AAA^\dagger$ with truncated SVD, $k = 5$) and \eqref{eq:OLE5} (approximating $\AAA^\dagger$ with Tikhonov regularization, $\beta = 10^{-6}$). The true source is displayed in Figure \ref{fig:ex3true}(a).}
    \label{fig:ex3}
\end{figure}

\begin{figure}[H]
    \centering
    \begin{subfigure}[b]{0.45\linewidth}        %% or \columnwidth
        \centering
        \includegraphics[width=\linewidth]{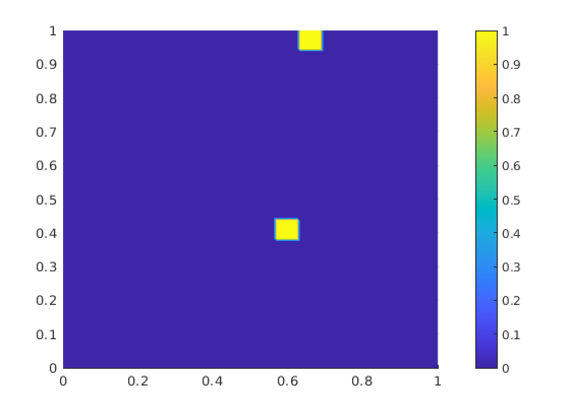}
        \caption{True sources.}
        %\label{fig:1A}
    \end{subfigure}
    \begin{subfigure}[b]{0.45\linewidth}        %% or \columnwidth
        \centering
        \includegraphics[width=\linewidth]{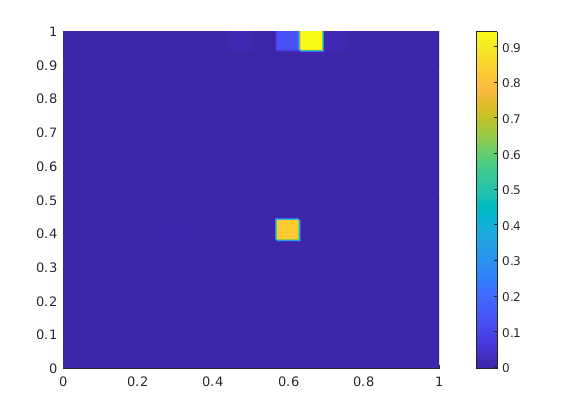}
        \caption{Inverse solution.}
        %\label{fig:1B}
    \end{subfigure}\par
        \begin{subfigure}[b]{0.45\linewidth}        %% or \columnwidth
        \centering
        \includegraphics[width=\linewidth]{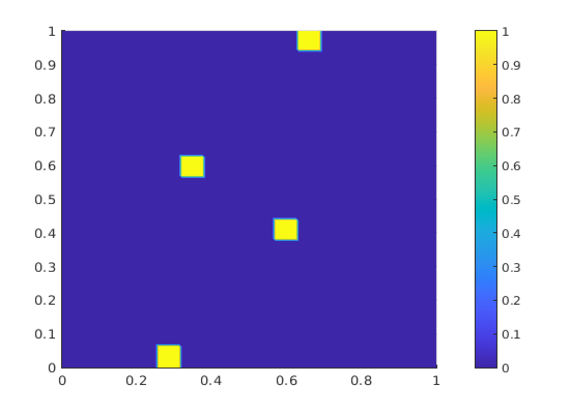}
        \caption{True sources.}
        %\label{fig:1A}
    \end{subfigure}
    \begin{subfigure}[b]{0.45\linewidth}        %% or \columnwidth
        \centering
        \includegraphics[width=\linewidth]{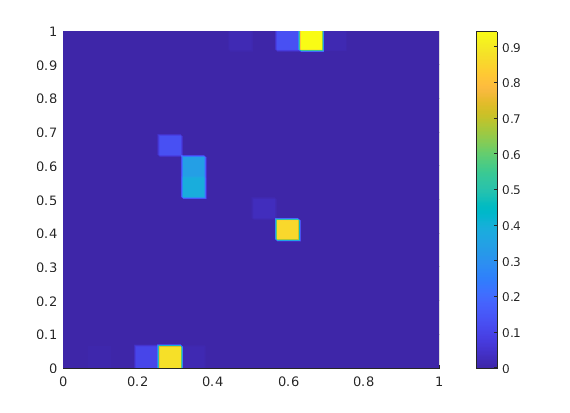}
        \caption{Inverse solution.}
        %\label{fig:1B}
    \end{subfigure}\par
        \begin{subfigure}[b]{0.45\linewidth}        %% or \columnwidth
        \centering
        \includegraphics[width=\linewidth]{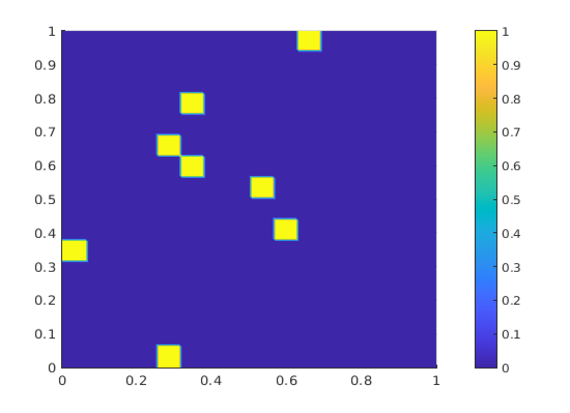}
        \caption{True sources.}
        %\label{fig:1A}
    \end{subfigure}
    \begin{subfigure}[b]{0.45\linewidth}        %% or \columnwidth
        \centering
        \includegraphics[width=\linewidth]{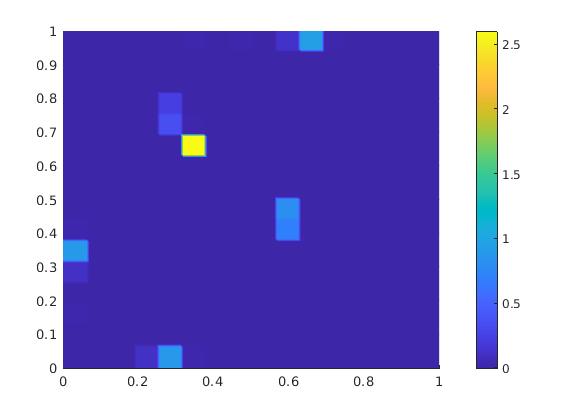}
        \caption{Inverse solution.}
        %\label{fig:1B}
    \end{subfigure}\par
    \caption{Example 4, recovering several local sources. Here, $\alpha = 0.01$ and $\halpha = 10^{-6}$ in all the three experiments.}
    \label{fig:ex4}
\end{figure}

\appendix

\section{\textcolor{black}{Standard sparsity regularization}} \label{app:standard}
\textcolor{black}{
We observed in panel b) of Figure \ref{fig:ex0} that standard sparsity regularization failed to recover an interior source. We will now explore this issue in some detail.}

\textcolor{black}{
Recall the definition \eqref{eq:pre2} of the forward operator $K_h: F_h \rightarrow L^2(\partial\Omega)$ and consider the problem
\begin{equation}
    \min_{f \in F_h}{\sum_i |(f,\phi_i)|} \quad \textnormal{subject to} \quad K_h f = K_h\phi_j.\label{eq:app1}
\end{equation}
This is the basis pursuit problem associated with \eqref{eq1}-\eqref{eq2} when $d=K_h\phi_j$, provided that $w_i = 1$ for $i=1,2,\ldots,n$, i.e., with standard unweighted sparsity regularization. We assume in this appendix that the basis functions $\phi_1, \phi_2, \ldots, \phi_n$ for $F_h$ satisfy  
\begin{equation} \label{eq:app0}
    \| \phi_1 \|_\infty = \| \phi_2 \|_\infty = \ldots = \| \phi_n \|_\infty.  
\end{equation}
}

\textcolor{black}{
Let us define the orthogonal projection
\begin{equation*}
    P_h: F_h \rightarrow \mathcal{N}(K_h)^\perp, 
\end{equation*}
where $\mathcal{N}(K_h)$ denotes the null space of $K_h$ and we employ the standard $L^2$-inner product on $F_h \subset L^2(\Omega)$.
Since $P_h$ and $K_h$ have the same null space, we can reformulate the basis pursuit problem \eqref{eq:app1} as 
\begin{equation}
    \min_{f \in F_h}{\sum_i |(f,\phi_i)|} \quad \textnormal{subject to} \quad P_h f = P_h\phi_j.\label{eq:app2}
\end{equation}
The associated Lagrangian $\mathcal{L}: F_h \times F_h \rightarrow \mathbb{R}$ reads 
\begin{equation*}
    \mathcal{L}(f, \lambda) = {\sum_i |(f,\phi_i)|} + (\lambda, P_h\phi_j - P_h f), 
\end{equation*}
and the Lagrange conditions become
\begin{align}
  P_h\lambda &\in \partial_f \left(\sum_i |(f,\phi_i)|\right), \label{eq:app2.1}\\
  P_h f &= P_h\phi_j. \nonumber
\end{align}}

\textcolor{black}{Since $f \rightarrow |(f,\phi_i)|$ is convex for $i=1,2,\ldots,n$, it follows that 
\begin{equation*}
    \partial_f \left(\sum_i |(f,\phi_i)|\right) = \sum_i \partial_f |(f,\phi_i)|,
\end{equation*}
provided that one interprets the right-hand-side in terms of the Minkowski sum of sets.
According to the definition of the subgradient, $q \in \partial_f |(f,\phi_i)|$ if
\begin{equation} \label{eq:app2.2}
    |(g,\phi_i)| \geq |(f,\phi_i)| + (q, g - f), \quad \forall g \in F_h.
\end{equation}
By expanding $f, g$ and $q$ in the orthonormal $F_h$-basis, i.e., 
\begin{align*}
    f(x) &= \sum_k f_k \, \phi_k (x), \\
    g(x) &= \sum_k g_k \, \phi_k (x), \\
    q(x) &= \sum_k q_k \, \phi_k (x), 
\end{align*}
we get from \eqref{eq:app2.2} that 
\begin{equation*}
    |g_i| \geq |f_i| + \sum_k q_k (\phi_k, g - f), \quad \forall g \in F_h.
\end{equation*}
Note that this implies that $q_k = 0$ for $k \neq i$. Consequently, $q (x) = q_i \phi_i (x)$, and $q_i$ must obey the inequality constraint
\begin{equation*}
    |g_i| \geq |f_i| + q_i(g_i - f_i), \quad \forall g \in F_h,
\end{equation*}
which implies that 
\begin{equation*} 
    q_i \in \begin{cases}
        \{1\}, & f_i > 0, \\
        \{-1\}, & f_i < 0, \\
        [-1,1], & f_i = 0.
    \end{cases}
\end{equation*}
Thus, we can write \eqref{eq:app2.1} as
\begin{equation} \label{eq:app3}
    [P_h\lambda]_i \in \begin{cases}
        \{1\}, & f_i > 0, \\
        \{-1\}, & f_i < 0, \\
        [-1,1], & f_i = 0,
    \end{cases}
\end{equation}
where we use the notation $[P_h\lambda]_i = (P_h\lambda,\phi_i)$.
}

\textcolor{black}{
Assume that 
\begin{equation} \label{eq:app4}
    f^*(x) = \sum_i f_i^* \phi_i(x)
\end{equation}
is a solution of \eqref{eq:app2} with associate Lagrange multiplier $\lambda^*$. Then $f^*$ and $\lambda^*$ satisfy \eqref{eq:app2.1} and from \eqref{eq:app3} we find that:} 
\textcolor{black}{
\begin{description}
\item{(a)} $-1 \leq [P_h\lambda^*]_i \leq 1$, for $i=1,2,\ldots,n$. 
\item{(b)} If a basis function $\phi_i$ with support strictly in the interior of $\Omega$ is present in the solution \eqref{eq:app4} and $f_i^* > 0$, then $[P_h\lambda^*]_i = 1$. That is, $P_h\lambda^*$ attains its maximum in the interior region associated with $\phi_i$, provided that \eqref{eq:app0} holds.
\item{(c)}
On the other hand, from the analysis presented in Section 2 in \cite{Elv20}, we know that the infinite-dimensional counterpart\footnote{\textcolor{black}{That is, the orthogonal projection when $F_h$ is replaced with $L^2(\Omega)$.}} $P\lambda^*$ to $P_h\lambda^*$ satisfies 
\begin{equation*}
    - \Delta P\lambda^* + \epsilon P\lambda^* = 0. 
\end{equation*}
It thus follows from classical maximum principles that $P\lambda^*$ can {\em not} attain a (non-negative) maximum in the interior of $\Omega$. This is not compatible/consistent with $P_h\lambda^*$ attaining its maximum in the interior which, according to (b), would be case if the solution $f^*$ is positive in an interior region. % of \eqref{eq:app2}  has support in the interior. 
Hence, we expect that $f^*$ only can be positive close to the boundary $\partial \Omega$ of $\Omega$, cf. panel b) in Figure \ref{fig:ex0}.
\end{description}
}

\section{\textcolor{black}{Discretization of the state equation}} %of the PDE-constrained optimization problem}
\label{app:discretization}
\textcolor{black}{We will briefly explain how \eqref{eq:pre1} can be discretized, using the finite element method, and thereby obtain an expression for the matrix $\tilde{\mathsf{K}}$ in \eqref{eq:pre4}. As mentioned in the numerical experiments section, the state $u(x)=\sum_k u_k N_k(x)$ and the source $f(x)=\sum_i f_i \, \phi_i(x)$ were discretized in terms of first order Lagrange elements and the characteristic functions \eqref{eq:revision2}, respectively.} 

\textcolor{black}{The discrete matrix-vector version of \eqref{eq:pre1} reads 
\begin{equation*}
    \mathsf{L}\mathbf{u} + \epsilon\mathsf{M}\mathbf{u} = \tilde{\mathsf{M}}\mathbf{f},
\end{equation*}
where $\mathsf{L}$ and $\mathsf{M}$ denote the standard stiffness and mass matrices, respectively, and 
\begin{equation*}
    \tilde{\mathsf{M}} = [\tilde{m}_{ki}], \quad \tilde{m}_{ki} = (\phi_i,N_k)_{L^2(\Omega)}. 
\end{equation*}
Hence, 
\begin{equation*}
    \mathbf{u} = [\mathsf{L} + \epsilon\mathsf{M}]^{-1} \tilde{\mathsf{M}} \mathbf{f},
\end{equation*}
and, if we discretize the fidelity term in \eqref{eq1} and combine it with this expression for $\mathbf{u}$, we get
\begin{equation*}
    \frac{1}{2} (\mathbf{u}-\mathbf{d})^T\mathsf{M}_\partial(\mathbf{u}-\mathbf{d}) =
    \frac{1}{2} \left([\mathsf{L} + \epsilon\mathsf{M}]^{-1} \tilde{\mathsf{M}}\mathbf{f} - \mathbf{d}\right)^T\mathsf{M}_\partial \left([\mathsf{L} + \epsilon\mathsf{M}]^{-1} \tilde{\mathsf{M}}\mathbf{f}-\mathbf{d} \right). 
\end{equation*}
Since the "boundary mass matrix" $\mathsf{M}_\partial$ is symmetric and positive semi-definite, we can take the square root of it to obtain the following Euclidean form of the fidelity term 
\begin{equation*}
    \frac{1}{2} \left\|\mathsf{M}_\partial^{\frac{1}{2}}[\mathsf{L} + \epsilon\mathsf{M}]^{-1} \tilde{\mathsf{M}} \mathbf{f} - \mathsf{M}_\partial^{\frac{1}{2}}\mathbf{d}\right\|_2^2 
    =  \frac{1}{2} \left\| \mathsf{M}_\partial^{\frac{1}{2}} \tilde{\mathsf{K}} \mathbf{f} - \mathsf{M}_\partial^{\frac{1}{2}}\mathbf{d} \right\|_2^2,
    %=  \frac{1}{2} \left\|\AAA \mathbf{f} -\mathbf{b}\right\|_2^2,  
\end{equation*}
where 
\begin{align*}
\tilde{\mathsf{K}} &:= [\mathsf{L} + \epsilon\mathsf{M}]^{-1} \tilde{\mathsf{M}} .
\end{align*}}

\bibliographystyle{abbrv}
\bibliography{references}

\end{document}